\newtheorem{theorem}{Theorem}[section]
\theoremstyle{definition}
\newtheorem{proposition}[theorem]{Proposition}
\newtheorem{lemma}[theorem]{Lemma}
\newtheorem{corollary}[theorem]{Corollary}
\newtheorem{conjecture}[theorem]{Conjecture}
\renewcommand{\setminus}{{\smallsetminus}}
\newcommand{\RR}{{\mathbb{R}}}
\newcommand{\Z}{{\mathbb{Z}}}
\newcommand{\QQ}{{\mathbb{Q}}}
\def\BN{\mathbbm N}
\def\be{\begin{equation}}
\def\ee{\end{equation}}
\begin{document}

\title[Hyperbolic $3$-manifolds with finite dimensional skein modules]{
Infinite families of hyperbolic $3$-manifolds with finite dimensional skein modules} 

\author{Renaud Detcherry}
\address{Max Planck Institute for Mathematics \\
         Vivatsgasse 7, 53111 Bonn, Germany \newline
         {\tt \url{http://people.mpim-bonn.mpg.de/detcherry}}
         }
\email{detcherry@mpim-bonn.mpg.de}

\thanks{
1991 {\em Mathematics Classification.} Primary 57N10. Secondary 57M25.
\newline
{\em Key words and phrases: Skein modules, Witten conjecture, 2-bridges knots
}
}

\date{Friday 1 March, 2019}

\begin{abstract} The Kauffman bracket skein module $K(M)$ of a $3$-manifold $M$ is the quotient of the $\QQ(A)$-vector space spanned by isotopy classes of links in $M$ by the Kauffman relations. A conjecture of Witten states that if $M$ is closed then $K(M)$ is finite dimensional. We introduce a version of this conjecture for manifolds with boundary and prove a stability property for generic Dehn-filling of knots. As a result we provide the first hyperbolic examples of the conjecture, proving that almost all Dehn-fillings of any two-bridge knot satisfy the conjecture.
\end{abstract}

\maketitle


\section{Introduction}
\label{sec.intro}
The Kauffman bracket skein modules were introduced independent by Turaev \cite{Tu:skeinmodule} and Przytycki \cite{Prz91} as a way of generalizing the Jones polynomial (in its Kauffman bracket formulation \cite{Kau87}) to links in an arbitrary $3$-manifold. Although the concept of skein modules can be generalized to other contexts than just the Kauffman bracket skein module, in this paper, we will only consider Kauffman bracket skein modules and refer to them as just skein modules. Let $\mathcal{R}$ be a ring containing $\Z[A,A^{-1}].$ For $M$ a compact oriented $3$-manifold, with or without boundary, the skein module with $\mathcal{R}$-coefficients $K(M,\mathcal{R})$ is the $\mathcal{R}$-module defined by:
$$K(M,\mathcal{R})=\mathrm{Span}_{\mathcal{R}}(\ \textrm{banded links in M} \ )/_{\textrm{isotopy, Kauffman relations}}$$
where the Kauffman relations K1 and K2 are given in Figure \ref{fig:kauffman}.
\begin{figure}[!htpb]
\centering
\def \svgwidth{.6\columnwidth}
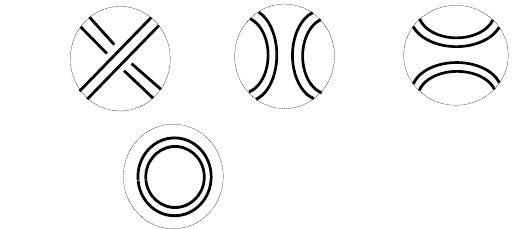
\caption{The Kauffman relations K1 and K2. K1 relates three links that differ in a small ball as shown, K2 simplifies split union of a link and a trivial knot in a small ball.}
\label{fig:kauffman}
\end{figure}
\\ Although the traditional definition simply uses $\mathcal{R}=\Z[A,A^{-1}],$ in this paper we will mostly consider coefficients $\mathcal{R}=\QQ(A).$ This has the effect of killing all the torsion in $K(M,\Z[A,A^{-1}])$ and moreover, $K(M,\QQ(A))$ is then a $\QQ(A)$-vector space.
\\ The skein module $K(M,\Z[A,A^{-1}])$ is often not finitely generated; in fact, by results of Bullock and Przytycki-Sikora \cite{Bul:character}\cite{PS00}, the evaluation $A=-1$ yields a surjective map to the $\mathrm{SL}_2(\mathbb{C})$-character variety of $M.$ Nonetheless, a surprising conjecture of Witten states that the skein modules of closed $3$-manifolds with $\QQ(A)$-coefficients are finitely generated:
\begin{conjecture}\label{conj:closed}(Witten's skein module finiteness conjecture) Let $M$ be a closed compact oriented $3$-manifold. Then the Kauffman bracket skein module $K(M,\QQ(A))$ is a finite dimensional $\QQ(A)$-vector space.
\end{conjecture}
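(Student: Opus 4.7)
The plan is to attack this via a combination of a Heegaard splitting with the representation theory of the Kauffman bracket skein algebra of surfaces. Given any closed oriented $M$, fix a Heegaard splitting $M = H_1 \cup_\Sigma H_2$ with $\Sigma = \Sigma_g$ a splitting surface of genus $g$ and each $H_i$ a genus-$g$ handlebody. The skein module should then be realized as the relative tensor product
\[
K(M,\QQ(A)) \;=\; K(H_1,\QQ(A)) \otimes_{K(\Sigma,\QQ(A))} K(H_2,\QQ(A)),
\]
where $K(\Sigma,\QQ(A))$ is the Kauffman bracket skein algebra of the splitting surface and each $K(H_i,\QQ(A))$ is a cyclic module over it, generated by the empty link. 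Finite-dimensionality of $K(M,\QQ(A))$ is thereby equivalent to the statement that the two annihilators of the empty link in $K(\Sigma,\QQ(A))$, one for each side of the splitting, together generate a subspace of finite codimension in $K(\Sigma,\QQ(A))$.

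The first substantive step is to make the modules $K(H_i,\QQ(A))$ and the corresponding annihilators concrete. By results of Przytycki, $K(H_g,\QQ(A))$ admits a basis indexed by isotopy classes of multicurves on a spine of $H_g$, and $K(\Sigma_g,\QQ(A))$ acts on it by handle-sliding curves in $\Sigma_g$ across the compression disks of $H_g$. The annihilator of the empty link in $K(H_i,\QQ(A))$ cuts out, at the classical limit $A=-1$, a Lagrangian subvariety $X(H_i) \subset X(\Sigma_g)$ of the Goldman symplectic character variety, corresponding to representations of $\pi_1(\Sigma_g)$ that extend to $\pi_1(H_i)$. The scheme-theoretic intersection $X(H_1) \cap_{X(\Sigma_g)} X(H_2)$ recovers the $\SL_2(\C)$-character variety $X(M)$ of $M$, which is a zero-dimensional scheme whenever $M$ is closed and hyperbolic (by Mostow--Thurston rigidity) and is also known to be zero-dimensional, or at worst controllable, in the Seifert fibered and graph manifold cases. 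So at $A=-1$ the claim reduces to the well-understood classical finiteness of the character variety.

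The main obstacle, and the true content of Conjecture \ref{conj:closed}, is the quantization step: promoting this classical zero-dimensionality at $A=-1$ to finite-dimensionality of $K(M,\QQ(A))$ at generic $A$. A naive flatness argument fails because $K(M,\Z[A,A^{-1}])$ is known to have torsion; one must exploit precisely that passing to $\QQ(A)$-coefficients kills all of this torsion. The natural framework for such an argument is factorization homology of the quantum group $U_q(\mathfrak{sl}_2)$ over $M$, interpreting $K(M,\QQ(A))$ as global sections of a structure sheaf on a quantum character stack of $M$, so that finiteness of the classical character scheme propagates to finite-dimensionality at generic $q = A^2$ via a deformation / semicontinuity argument on stacks. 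An alternative, more hands-on, route would be to combine the Bonahon--Wong theory of quantum traces with a direct bound on the codimension of the sum of the two handlebody annihilators in $K(\Sigma_g,\QQ(A))$, uniformly in the gluing diffeomorphism. Either approach reduces to a delicate non-commutative algebraic problem whose solution, in full generality for all closed $3$-manifolds, is the essential difficulty of the conjecture and is precisely what the present paper circumvents (in special cases) by its stability result for Dehn fillings.
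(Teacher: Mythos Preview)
The statement you are attempting to prove is Conjecture~\ref{conj:closed}, and the paper does \emph{not} prove it: it is stated as an open conjecture, and the paper's contribution (Theorems~\ref{thm:examples} and~\ref{thm:Dehnfilling}) is to verify it for certain Dehn fillings of two-bridge and torus knots. There is therefore no ``paper's own proof'' to compare against. Your proposal is, by your own admission in its final paragraph, not a proof either: you correctly identify that the quantization step---passing from finiteness at $A=-1$ to finiteness over $\QQ(A)$---is ``the essential difficulty of the conjecture,'' and you do not carry it out. What you have written is a heuristic outline of why one might believe the conjecture, not an argument that establishes it.

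Beyond the acknowledged gap, there is a factual error in your classical step. You assert that the $\SL_2(\C)$-character variety $X(M)$ of a closed hyperbolic $3$-manifold is a zero-dimensional scheme ``by Mostow--Thurston rigidity.'' This is false. Mostow rigidity only says that the character of the discrete faithful representation is an isolated point of $X(M)$; it says nothing about other components. Any closed hyperbolic $3$-manifold with $b_1(M)>0$ (and there are many, e.g.\ hyperbolic surface bundles over $S^1$) has a positive-dimensional family of abelian characters, so $X(M)$ is not zero-dimensional. Likewise, for Seifert fibered and graph manifolds the character variety is typically positive-dimensional (already $X(T^3)$ has dimension $3$), so the phrase ``at worst controllable'' hides exactly the phenomenon that makes the conjecture nontrivial even at the classical level. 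The point of the conjecture is precisely that finite-dimensionality over $\QQ(A)$ holds \emph{despite} the classical character variety often being positive-dimensional; your outline, as written, would not even get off the ground in those cases.
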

The first written mention of this conjecture can be found in \cite{Car17} and a detailed exposition in \cite{GM18}. Many skein modules of closed $3$-manifolds had already been computed (often with $\Z[A,A^{-1}]$ coefficients) and in all those cases, the module $K(M,\QQ(A))$ is indeed finite dimensional. We give a list of closed $3$-manifolds that are known to satisfy the conjecture: $S^3$ (where $K(S^3,\QQ(A))=\QQ(A)$; this is equivalent to the existence and unicity of the Kauffman bracket), $S^2\times S^1$ and lens spaces by Hoste-Przytycki \cite{HP93}\cite{HP95}, integer Dehn-filling on the trefoil knot by Bullock \cite{Bul:trefoil}, the quaternionic manifold by Gilmer and Harris \cite{GH07}, some Dehn-fillings of $(2,2b)-$torus links by Harris \cite{Har10}, a family of prime prism manifolds by Mroczkowski \cite{Mro11}, the $3$-torus by Carrega \cite{Car17}. Moreover Przytycki showed that for a connected sum of $3$-manifolds, $K(M_1\# M_2,\QQ(A))=K(M_1,\QQ(A))\otimes K(M_2,\QQ(A)),$ hence Conjecture \ref{conj:closed} is stable under connected sums. We note however that to the author's knowledge, there was no hyperbolic $3$-manifold $M$ for which it was known that $K(M,\QQ(A))$ is finite dimensional. One of the results of this article is to give examples of such $3$-manifolds:
\begin{theorem}\label{thm:examples} Let $K$ be a two-bridge knot or a torus knot, $E_K$ be the knot complement and $E_K(r)$ be the surgery on $K$ of slope $r.$ Then for all $r$ except at most finitely many, $K(E_K(r),\QQ(A))$ is finite dimensional, that is, $E_K(r)$ satisfies the finiteness conjecture.
\end{theorem}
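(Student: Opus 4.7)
The plan is to reduce Theorem \ref{thm:examples} to a boundary version of Witten's conjecture for the knot exterior $E_K$, combined with a stability property for generic Dehn filling. For any compact oriented $3$-manifold $M$ with torus boundary, $K(M,\QQ(A))$ carries a natural action of the peripheral skein algebra $K(T^2\times I,\QQ(A))$, which contains in particular the commutative polynomial subalgebras $\QQ(A)[m]$ and $\QQ(A)[l]$ generated by (thickenings of) the meridian and longitude. My first step is to formulate the candidate boundary conjecture for $E_K$: namely, that $K(E_K,\QQ(A))$ is finitely generated as a module over the peripheral skein algebra. This matches the dimension-counting heuristic from Bullock's theorem, since the $\mathrm{SL}_2(\C)$-character variety of $E_K$ is one-dimensional and is dominated by the peripheral character variety, so one expects the same behavior on the skein side.

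Second, I would prove the stability theorem: if $E_K$ satisfies this Boundary Finiteness Property, then $K(E_K(r),\QQ(A))$ is finite dimensional for all but finitely many slopes $r$. The key observation is that under the inclusion $E_K \hookrightarrow E_K(r)$, the slope-$r$ curve $\gamma_r$ becomes the boundary of a meridian disk in the glued solid torus $V$, so that $K(E_K(r),\QQ(A))$ is a pushout of $K(E_K,\QQ(A))$ and $K(V,\QQ(A))$ over $K(T^2\times I,\QQ(A))$. Since $K(V,\QQ(A))\cong\QQ(A)[z]$ and the peripheral action on $K(V)$ factors through Chebyshev polynomials evaluated on the core, filling imposes the relations $T_n(\gamma_r)\equiv z^n$ on $K(E_K)$ for every $n$. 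Using the Frohman--Gelca presentation to express $\gamma_r$ in terms of $m$ and $l$, and combining with finite generation over the peripheral algebra, I would argue that these relations produce a monic polynomial in $m$ annihilating the quotient, hence finite dimensionality. The exceptional slopes correspond to values of $r$ where the leading coefficient of this polynomial degenerates.

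Third, I would verify the Boundary Finiteness Property for torus knots and for two-bridge knots. For torus knots, the Seifert-fibered structure of the complement and explicit computations in the spirit of Bullock's work on the trefoil should yield a very small generating set over the peripheral algebra. For two-bridge knots, I would combine the existing literature on skein modules of two-bridge exteriors with the $2$-generator presentation of $\pi_1(E_K)$ coming from the bridge structure, producing an explicit finite generating set over $\QQ(A)[m]$, hence \emph{a fortiori} over the full peripheral algebra.

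The main obstacle will be the stability step: making rigorous the passage from finite generation over the noncommutative peripheral algebra to genuine finite dimensionality after Dehn filling, and isolating the finite set of exceptional slopes. A subtle point is that the filled curve is the combination $pm+ql$, so one cannot work solely inside $\QQ(A)[m]$: the argument must track the $q$-commutation between the two commuting subalgebras $\QQ(A)[m]$ and $\QQ(A)[l]$ inside $K(T^2\times I,\QQ(A))$, and show that the resulting bidegree bookkeeping still forces finiteness of the quotient for all but finitely many $r$.
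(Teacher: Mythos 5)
Your overall architecture (a boundary finiteness property for $E_K$ plus a stability theorem under generic Dehn filling, with the inputs for two-bridge and torus knots taken from Le and March\'e) matches the paper's. But there is a genuine gap in the hypothesis you feed into the stability step. You formulate the Boundary Finiteness Property as finite generation of $K(E_K,\QQ(A))$ over the \emph{full} peripheral algebra $K(T^2,\QQ(A))$, and this is too weak to run the argument. The Dehn-filling relation $\gamma_r\cdot f=(-A^2-A^{-2})f$, propagated through the Frohman--Gelca basis $\tilde{e}_{\alpha,\beta}$, only lets you translate indices in the single direction $(p,q)$; the left ideal of $K(T^2,\QQ(A))$ that it generates has infinite-dimensional quotient (spanned by a full band of lattice points transverse to $(p,q)$), so it contains no monic polynomial in $m$ unless the filling slope is the meridional one. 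If $K(E_K,\QQ(A))$ were, say, free of rank one over $K(T^2,\QQ(A))$, it would satisfy your Boundary Finiteness Property, yet the quotient reachable by these relations would still be spanned by an infinite band. The monic annihilating polynomial in $m$ that you hope to ``produce'' in your second step cannot come from finite generation over the peripheral algebra: a free module has no nonzero annihilator at all. The paper makes exactly this point in its closing comments.

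What the argument actually needs --- and what Le and March\'e actually supply --- is finite generation over the small commutative subalgebra $\QQ(A)[m]$. From that, for each generator $f$ the family $(l^i\cdot f)_{i\geqslant 0}$ is linearly dependent over $\QQ(A)[m]$, which yields a nonzero element of $K(T^2,\QQ(A))$ annihilating $f$ \emph{before} filling; the Newton polygon of this annihilating polynomial cuts the index lattice down to a second band, transverse to the Dehn-filling band for every slope other than the finitely many slopes of the polygon's sides, and the intersection of two non-parallel bands contains finitely many lattice points. You in fact have this stronger input in hand for two-bridge knots (you note generation over $\QQ(A)[m]$ and then pass ``a fortiori'' to the peripheral algebra --- that weakening is precisely what you must not do), and March\'e's computation gives it for torus knots as well. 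Relatedly, your description of the exceptional slopes as those where ``the leading coefficient degenerates'' should be replaced by: the slopes of the sides of the Newton polygons of the annihilating polynomials of the generators. With the hypothesis corrected to $\QQ(A)[m]$-finite generation and the two-band bookkeeping carried out in the quantum torus, your plan becomes the paper's proof.
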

As two-bridge knots, with the exception of $(2,2n+1)$-torus knots, are hyperbolic, and for an hyperbolic knot, all Dehn-fillings but at most finitely many are hyperbolic by Thurston's hyperbolic Dehn surgery theorem \cite{Th}, Theorem \ref{thm:examples} gives infinite families of closed compact oriented $3$-manifolds which satisfy Conjecture \ref{conj:closed}.

Theorem \ref{thm:examples} relies on the work of Le \cite{Le:2bridge} which computed the skein module of two-bridge knots and March\'e \cite{Mar10} for the skein module of torus knots. To state their results, first we note that if $M$ is a manifold with boundary, $K(M,\QQ(A))$ has a natural module structure over $K(\partial M \times [0,1],\QQ(A)).$ Then the skein module of the knot complement $E_K$ where $K$ is either a two-bridge knot or a torus knot is finitely generated over $\QQ(A)[m],$ where $m$ is the meridian of $K,$ viewed as an element of $K(\partial E_K,\QQ(A)).$

With that in mind, Theorem \ref{thm:examples} will be obtained as a direct corollary of our main result: 
\begin{theorem}\label{thm:Dehnfilling}
Let $K\subset S^3$ be a knot, $E_K=S^3\setminus K,$ $m \subset \partial E_K$ be the meridian of $K$ and assume that $K(E_K,\QQ(A))$ is finitely generated over $\QQ(A)[m].$ Then for all $r \in \QQ \cup \lbrace \infty \rbrace$ except possibly finitely many $r,$ $K(E_K(r),\QQ(A))$ is finite dimensional.
\end{theorem}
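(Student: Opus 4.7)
The plan is to use the hypothesis of finite generation over $\QQ(A)[m]$ together with the relation imposed by Dehn filling to extract, for almost every slope $r$, a nonzero polynomial in $m$ that annihilates $K(E_K(r),\QQ(A))$. The boundary torus skein algebra acts on $K(E_K,\QQ(A))$ through its Frohman--Gelca presentation with generators the meridian $m$ and the longitude $l$. Since $K(E_K,\QQ(A))$ is finitely generated over $\QQ(A)[m]$, its localization $K(E_K,\QQ(A)) \otimes_{\QQ(A)[m]} \QQ(A)(m)$ at the generic point is a finite-dimensional $\QQ(A)(m)$-vector space, and so for every $v$ in the localization the sequence $v, lv, l^2 v, \ldots$ is eventually linearly dependent over $\QQ(A)(m)$. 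Taking a common left multiple in the Ore skew-polynomial extension of $\QQ(A)(m)$ by $l$ and clearing denominators produces a nonzero noncommutative polynomial $P(m,l)$ that annihilates $K(E_K,\QQ(A))$, and hence also the quotient $K(E_K(r),\QQ(A))$.

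The second annihilating relation comes from the filling: in $E_K(r)=E_K\cup V$, the slope-$r$ curve $\gamma_r\subset\partial E_K$ bounds the meridian disk of the solid torus $V$, so $\gamma_r$ acts on $K(E_K(r),\QQ(A))$ as the scalar $-A^2-A^{-2}$. Expressing $\gamma_r=G_r(m,l)$ via Frohman--Gelca, we obtain the second annihilator $G_r(m,l)+A^2+A^{-2}$. The Ore ring $\QQ(A)(m)\langle l\rangle$ is a noncommutative principal ideal domain, so the left ideal generated by $P(m,l)$ and $G_r(m,l)+A^2+A^{-2}$ is principal, generated by a right greatest common divisor. When this GCD is a unit, the ideal is the whole Ore ring; clearing denominators, it then contains a nonzero $R_r(m)\in\QQ(A)[m]$, which annihilates $K(E_K(r),\QQ(A))$. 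The image of $\QQ(A)[m]$ in $\mathrm{End}_{\QQ(A)}(K(E_K(r),\QQ(A)))$ is therefore a quotient of the finite-dimensional ring $\QQ(A)[m]/(R_r(m))$, and since $K(E_K(r),\QQ(A))$ is finitely generated over $\QQ(A)[m]$ as a quotient of $K(E_K,\QQ(A))$, it is finite-dimensional over $\QQ(A)$.

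The main obstacle is to show that $P(m,l)$ and $G_r(m,l)+A^2+A^{-2}$ are right coprime in the Ore ring for all but finitely many slopes $r$. It suffices to check, for each irreducible right factor $Q$ of $P$, that $Q\mid G_r+A^2+A^{-2}$ holds for only finitely many $r$. I would handle this by specializing to the classical limit $A=-1$: the quantum torus becomes commutative, $m$ and $l$ correspond to $M+M^{-1}$ and $L+L^{-1}$ in the torus character variety with eigenvalue coordinates $M,L$, and $G_{p/q}+2$ becomes, up to a Laurent monomial unit, the square $(M^pL^q+1)^2$. The divisibility condition at $A=-1$ then becomes the containment of the irreducible algebraic curve $\{Q|_{A=-1}=0\}\subset(\C^*)^2$ inside $\{M^pL^q=-1\}$, which can only happen if the curve is itself a coset of a one-dimensional algebraic subgroup $\{M^aL^b=1\}$ with $(p,q)$ an integer multiple of $(a,b)$. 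For primitive $(p,q)$ this forces $(p,q)=\pm(a,b)$, giving at most one bad slope per irreducible factor of $P$. Upgrading this classical non-divisibility to the Ore setting over $\QQ(A)$ via semicontinuity then yields the desired finiteness of the exceptional slopes and completes the argument.
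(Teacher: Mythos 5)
Your overall architecture --- extract from finite generation over $\QQ(A)[m]$ a nonzero annihilator $P(m,l)$ of each generator, add the filling relation $\gamma_r+A^2+A^{-2}$, and try to combine the two into a nonzero annihilator lying in $\QQ(A)[m]$ alone, which then forces finite dimensionality of the quotient --- is a reasonable $q$-holonomic reformulation of the problem, and the first and last steps match the paper (Lemma 4.1 produces the same annihilating polynomial, and Lemma 4.4 is the same filling relation). But the paper then proceeds combinatorially rather than algebraically: it translates the annihilating relation by arbitrary $\tilde{e}_{\mu,\nu}$, reads off from the Newton polygon $\mathcal{P}$ of $P$ that $K(\partial E_K,\QQ(A))\cdot f$ is spanned by $\tilde{e}_{\alpha,\beta}\cdot f$ with $(\alpha,\beta)$ in a band $|\lambda(\alpha,\beta)|\leqslant M$ whenever $\lambda$ attains its maximum on $\mathcal{P}$ at a unique vertex, and uses the filling relation to cut this band down to the finite intersection with a transverse band. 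The excluded slopes are, explicitly, the slopes of the sides of the Newton polygons of the annihilators of the generators. No coprimality statement in a noncommutative ring is ever needed.

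The genuine gap in your proposal is precisely the step you defer to the end: showing that $P$ and $G_r+A^2+A^{-2}$ are right coprime for all but finitely many $r$. Three problems. First, in a noncommutative principal ideal domain an element has only finitely many irreducible right factors \emph{up to similarity}, but can have infinitely many actual irreducible right divisors (they come in projective families), so ``at most one bad slope per irreducible factor of $P$'' does not bound the set of bad slopes; you would need to argue via a resultant or a dimension count instead. Second, the ``semicontinuity'' upgrade from $A=-1$ to $\QQ(A)$ is asserted, not proved, and it is where all the difficulty sits: the coefficients of $P$ and of any right GCD live in $\QQ(A)(m)$ and may have poles at $A=-1$, the specialization of $P$ may degenerate or acquire spurious factors, and a common right factor over $\QQ(A)(m)$ does not obviously specialize to a common factor of the commutative limits without controlling leading coefficients and denominators. (Also, the classical limit of the filling relation is $(M^pL^q-1)^2$ up to a unit, not $(M^pL^q+1)^2$, since $\gamma_r\mapsto -\mathrm{tr}$ and the relation is $\gamma_r=-2$.) Third, a structural point: $m$ and $l$ do not $q$-commute in $K(T^2,\QQ(A))\cong\mathcal{T}^{\theta}$ --- by the product-to-sum formula $\tilde{e}_{0,1}\tilde{e}_{1,0}=A^{-1}\tilde{e}_{1,1}+A\tilde{e}_{1,-1}$ while $\tilde{e}_{1,0}\tilde{e}_{0,1}=A\tilde{e}_{1,1}+A^{-1}\tilde{e}_{1,-1}$ --- so $\QQ(A)(m)\langle l\rangle$ is not the ring that acts; one must pass to the full quantum torus $\QQ(A)(u)\langle v^{\pm1}\rangle$ with $m=u+u^{-1}$ and descend statements to the $\theta$-invariant part. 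Each of these can probably be repaired with enough work from the AJ-conjecture literature, but as written the proposal does not prove the finiteness of the exceptional set of slopes, which is the content of the theorem; the paper's Newton polygon and band argument settles it directly and elementarily.
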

We will get the above theorem from the presentation of the skein algebra $K(T^2,\QQ(A))$ of the torus as the quantum torus $\mathcal{T}^{\theta}$ by Frohman-Gelca \cite{FG00} and defining "annihilating polynomials" of skein elements in $K(E_K,\QQ(A))$ (see Lemma \ref{lem:minpoly}). The finite set of slopes will appear as the set slopes of the Newton polygons of the annihilating polynomials of the generators of $K(E_K,\QQ(A))$ over $\QQ(A)[m].$

The paper is organized as follows: in Section \ref{sec:prelim} we recall a few well-known facts about skein modules (algebra structure for thickened surfaces, module structure for manifolds with boundary, the quantum torus isomorphism of Frohman-Gelca) and introduce some notations. In Section \ref{sec:conj} we discuss how to generalize the finiteness conjecture to manifolds with boundary, and provide a few examples. Section \ref{sec:mainthm} is where we prove Theorem \ref{thm:Dehnfilling}. Finally, in Section \ref{sec:comments} we give comments on the proof of Theorem \ref{thm:Dehnfilling} and how it could generalize in other settings.
\section{Preliminaries on skein modules}
\label{sec:prelim}
In this section, we recall some well-known properties of skein modules of $3$-manifolds: the module structure over the boundary skein module, and the connection between the skein algebra of a $2$-dimensional torus $T^2$ and the quantum torus. All results stated here would be valid with $\Z[A,A^{-1}]$ coefficients, but we will only care about skein modules with $\QQ(A)$ coefficients in the rest of the paper.

Let $\Sigma$ be a compact oriented surface. We recall two basic facts about the skein modules $K(\Sigma \times [0,1],\QQ(A))$ of thickened surfaces:
\begin{enumerate}
\item  The set of multicurves (disjoint union of simple closed curves) in $\Sigma \times \lbrace 1/2 \rbrace$ is a basis of the skein module $K(\Sigma \times [0,1],\QQ(A)).$\cite{HP92}
\item The skein module $K(\Sigma \times [0,1],\QQ(A))$ has a natural algebra structure given by the stacking product:
Given two multicurves $\alpha$ and $\beta$ in the surface $\Sigma$ their product is $\alpha \times \lbrace 2/3 \rbrace \cup \beta \times \lbrace 1/3 \rbrace \in K(\Sigma \times [0,1],\QQ(A)),$ that is, $\alpha \cdot \beta$ is the link obtained by stacking $\alpha$ on top of $\beta.$ For a general element of $K(\Sigma \times [0,1],\QQ(A))$ the product is extended by bilinearity, and gives $K(\Sigma \times [0,1],\QQ(A))$ an algebra structure. For $\Sigma$ a compact oriented surface, we will usually abbreviate the skein algebra $K(\Sigma \times [0,1],\QQ(A))$ as  $K(\Sigma,\QQ(A)).$
\end{enumerate}

Now consider $M$ a compact oriented manifold with boundary surface $\partial M.$ The boundary $\partial M$ has a neighborhood in $M$ that is homeomorphic to $\partial M \times [0,1].$ Moreover, there is an homeomorphism 
$$M\simeq M \underset{\partial M=\partial M \times \lbrace 0 \rbrace}{\coprod} \partial M \times [0,1].$$ This induces a $K(\partial M,\QQ(A))$ structure on $K(M,\QQ(A)),$ given by stacking:

If $L \subset M$ is a banded link in $M$ and $\alpha$ is a multicurve on $\partial M,$ then we define $\alpha \cdot L \in K(M,\QQ(A))$ as the link obtained by pushing $L$ inside $M$ and stacking the multicurve $\alpha$ on top of it in the $\partial M \times [0,1]$ component. Again, for general skein elements, the scalar product is extended by bilinearity, and it induces a $K(\partial M,\QQ(A))$-module structure on $K(M,\QQ(A)).$

The case of skein algebra $K(T^2,\QQ(A))$ of the $2$-dimensional torus $T^2$ has caught special attention because of the connection with the quantum torus algebra found independently by Sallenave \cite{Sal99} and Frohman-Gelca \cite{FG00}.

Let $\mathcal{T}$ be the non-commutative $\QQ(A)$-algebra:
$$\mathcal{T}=\QQ(A)\langle u,v \rangle/_{uv=A^2vu},$$
which is called the \textit{quantum torus} algebra. We define a $\QQ(A)$-basis of $\mathcal{T}$ as vector space by setting for any $(\alpha,\beta)\in \Z^2,$
$$e_{\alpha,\beta}=A^{-\alpha \beta}u^{\alpha}v^{\beta}.$$
We note that the product of basis elements is given by 
\begin{equation}\label{eq:prod1}e_{\alpha,\beta}\cdot e_{\mu,\nu}=A^{\alpha \nu -\beta\mu} e_{\alpha+\mu,\beta+\nu}.
\end{equation}
By the above formula, the $\QQ(A)$-linear map
$$\begin{array}{rccl}\theta :  & \mathcal{T} & \longrightarrow & \mathcal{T}
\\ &  e_{\alpha,\beta} & \longrightarrow &  e_{-\alpha,-\beta}\end{array}$$
is an involution of the algebra $\mathcal{T}.$ A basis, indexed by $(\alpha,\beta) \in \Z^2/_{\lbrace \pm 1 \rbrace }$ of the subalgebra $\mathcal{T}^{\theta}$ of $\theta$-invariant elements of $\mathcal{T}$ is given by:
$$\tilde{e}_{\alpha,\beta}=e_{\alpha,\beta}+e_{-\alpha,\beta}=A^{-\alpha\beta}(u^{\alpha}v^{\beta}+u^{-\alpha}v^{-\beta}).$$
Moreover, we have, for $(\alpha,\beta), (\mu,\nu) \in \Z^2/_{\lbrace \pm 1 \rbrace }:$
\begin{equation}\label{eq:prod2}\tilde{e}_{\alpha,\beta} \cdot \tilde{e}_{\mu,\nu}=A^{\alpha \nu -\beta\mu} \tilde{e}_{\alpha+\mu,\beta+\nu}+A^{\beta \mu -\alpha\nu} \tilde{e}_{\alpha-\mu,\beta-\nu}.
\end{equation}
On the other hand, recall that $K(T^2,\QQ(A))$ has the set of multicurves on $T^2$ as basis. Recall that any non trivial simple closed curve on $T^2$ is of the form $\gamma_{q/p}=l^pm^q$ where $l,m$ are the meridian and longitude, and $p,q$ are coprime integers.

For $n\geqslant 0,$ let $T_n(x)$ be the $n$-th Chebychev polynomial, defined by 
$$T_0(x)=2, \ T_1(x)=x \ \textrm{and} \ xT_n(x)=T_{n+1}(x)+T_{n-1}(x).$$
For any pair $(p,q) \in \Z^2/_{\lbrace \pm 1 \rbrace },$ Frohman-Gelca \cite{FG00} defines an element
$$(p,q)_T=T_{gcd(p,q)}(\gamma_{q/p}),$$
with the convention that $(0,0)_T=T_0(\emptyset)=2 \emptyset,$ where $\emptyset$ is the empty multicurve, which is the unit of $K(T^2,\QQ(A)).$ One then has that:
\begin{theorem}\cite{FG00}\label{thm:isom} The map
$$\begin{array}{rccl}\varphi: & K(T^2 ,\QQ(A)) & \longrightarrow & \mathcal{T}^{\theta}
\\ & (p,q)_T & \longrightarrow & \tilde{e}_{p,q} 

\end{array}$$
is an isomorphism of algebras.
\end{theorem}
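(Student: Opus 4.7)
The plan is to show that $\varphi$ is both a $\QQ(A)$-linear bijection and an algebra homomorphism.

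For the bijection part, one first verifies that $\{(p,q)_T\}_{(p,q) \in \Z^2/\{\pm 1\}}$ is a $\QQ(A)$-basis of $K(T^2, \QQ(A))$, which under $\varphi$ goes term-by-term to the basis $\{\tilde{e}_{p,q}\}$ of $\mathcal{T}^\theta$. The isotopy classification of multicurves on $T^2$, combined with K2 to absorb disk-bounding components, yields a direct sum decomposition of $K(T^2,\QQ(A))$ over coprime $(p',q')$ modulo $\pm 1$ of the subspaces spanned by $\{\gamma_{q'/p'}^n\}_{n\geq 0}$. Since the Chebyshev polynomials $T_n$ are monic of degree $n$, the family $\{T_n(\gamma_{q'/p'})\}_{n \geq 0} = \{(np', nq')_T\}_{n \geq 0}$ is another basis of each summand, assembling to the desired basis.

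For the algebra homomorphism part, by linearity and (\ref{eq:prod2}) it suffices to verify the product-to-sum identity
\[
(p_1,q_1)_T \cdot (p_2,q_2)_T = A^{p_1q_2 - p_2q_1}(p_1+p_2, q_1+q_2)_T + A^{p_2q_1 - p_1q_2}(p_1-p_2, q_1-q_2)_T.
\]
The main geometric input is the primitive case $\gcd(p_i,q_i)=1$, where each $(p_i,q_i)_T$ is a simple closed curve $\gamma_{q_i/p_i}$. Put the two curves in minimal position on $T^2$: they meet transversely in $N := |p_1q_2 - p_2q_1|$ points, all of the same algebraic sign, and stacking produces a diagram in $T^2 \times [0,1]$ with $N$ crossings. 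Applying K1 at each crossing yields $2^N$ states. The two coherent states (all $A$-smoothings or all $A^{-1}$-smoothings) produce connected simple closed curves of slopes $(p_1+p_2, q_1+q_2)$ and $(p_1-p_2, q_1-q_2)$ respectively, weighted by $A^{\pm N}$. The remaining $2^N-2$ mixed states are shown to cancel in pairs via a state-sum argument: by an explicit involution on noncoherent states, one pairs each one with another producing the same underlying multicurve but with opposite framing exponents.

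To finish, one bootstraps from the primitive case to general $(p_i,q_i) = d_i(p_i',q_i')$ via the Chebyshev recursion $xT_n(x) = T_{n+1}(x) + T_{n-1}(x)$ and induction on $d_1+d_2$; the framing exponent $p_1q_2 - p_2q_1 = d_1d_2(p_1'q_2' - p_2'q_1')$ propagates correctly through this induction. The hardest part is the cancellation of mixed-state contributions in the primitive case: the bookkeeping is essentially immediate when $N=1$ (a single K1 resolution produces exactly the two coherent terms), but for larger $N$ one typically either reduces to $N=1$ by factoring a primitive curve as a product of two primitives with pairwise intersection number one, or exploits the $SL_2(\Z)$-action on slopes to bring any pair of primitive slopes to the standard pair $(1,0), (0,1)$ where the formula can be verified by inspection.
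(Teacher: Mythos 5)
The paper does not prove this statement: Theorem \ref{thm:isom} is imported from \cite{FG00} without proof, so there is no internal argument to compare against. Judged on its own terms, your outline gets the easy half right (the triangular change of basis from powers $\gamma_{q'/p'}^n$ to the monic Chebyshev elements $(np',nq')_T$ does establish bijectivity, modulo the small point that your ``direct sum'' decomposition is not literally direct since every summand contains the empty multicurve), but the hard half --- the product-to-sum formula --- contains a genuine error. The $2^N-2$ non-coherent states do \emph{not} cancel in pairs. They contribute contractible circles and non-coherent multicurves, and these contributions are exactly the lower-order terms that force the Chebyshev polynomials into the statement. Concretely, take $\gamma_{0/1}=l$ and $\gamma_{2/1}=lm^2$, so $N=2$: the two coherent states give $A^2\gamma_{1/1}^2$ and $A^{-2}m^2$, while each of the two mixed states yields a single contractible circle, hence $-(A^2+A^{-2})\cdot\emptyset$ by K2, for a total of $-2A^2-2A^{-2}$. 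This is precisely what is needed to assemble $A^2T_2(\gamma_{1/1})+A^{-2}T_2(m)=A^2(2,2)_T+A^{-2}(0,2)_T$; if the mixed states cancelled, the formula would hold for bare powers of curves and the Chebyshev basis would be pointless. Any correct proof must account for these non-coherent contributions rather than kill them.

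Your two proposed fallbacks do not repair this. The $\mathrm{SL}_2(\Z)$ action cannot bring an arbitrary pair of primitive slopes to $\bigl((1,0),(0,1)\bigr)$, because the geometric intersection number $|p_1q_2-p_2q_1|$ is an invariant of the pair under that action; the most the mapping class group buys you is normalizing \emph{one} curve to $(1,0)$, after which you must still compute $(1,0)_T\cdot(p,q)_T$ by resolving $|q|$ crossings --- this computation, done by induction on $|q|$ and then bootstrapped to general pairs via the Chebyshev recursion, is the actual content of the Frohman--Gelca proof. The alternative of factoring a primitive curve as a ``product'' of two primitives meeting once also needs care: such a product is a \emph{sum} of two Chebyshev terms, so the induction must control both, and the base identity it rests on is again the very formula being proved. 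I recommend either citing \cite{FG00} as the paper does, or restructuring the homomorphism step around the normalized computation $(1,0)_T\cdot(p,q)_T=A^{q}(p+1,q)_T+A^{-q}(p-1,q)_T$ with an honest treatment of the non-coherent states.
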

For $M$ a manifold with toric boundary, the Frohman-Gelca isomorphism gives a nice way to understand the action of $K(T^2,\QQ(A))$ on $K(M,\QQ(A)).$ It can be used to relate the skein module of a knot complement to the $q$-holonomicity of the colored Jones polynomials \cite{Gel02} and to the AJ conjecture (see \cite{Le:2bridge} for example).

We will use this isomorphism to prove our Dehn-filling result in Section \ref{sec:mainthm}. 
\section{Witten's finiteness conjecture and generalizations}
\label{sec:conj}
The module structure on $K(M,\QQ(A))$ described in Section \ref{sec:prelim} suggests the following straightforward generalization of the finiteness conjecture:
\begin{conjecture}\label{conj:boundary1}(Finiteness conjecture for manifolds with boundary)

 Let $M$ be a compact oriented $3$-manifold. Then $K(M,\QQ(A))$ is a finitely generated $K(\partial M,\QQ(A))$-module.
\end{conjecture}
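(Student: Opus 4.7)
The plan is to attack this via a reduction to building blocks, using the standard decomposition theory of 3-manifolds. First, one reduces to prime manifolds via connected sums: Przytycki's formula $K(M_1 \# M_2, \QQ(A)) = K(M_1,\QQ(A)) \otimes K(M_2,\QQ(A))$ (for connected sums along interior balls) is compatible with $K(\partial(M_1 \# M_2),\QQ(A)) = K(\partial M_1,\QQ(A)) \otimes K(\partial M_2,\QQ(A))$, so finite generation is preserved. Next, one attempts a Heegaard-splitting style reduction: writing $M = C \cup_\Sigma H$ with $H$ a handlebody and $C$ a compression body having $\partial_- C = \partial M$, and establishing a gluing formula
$$K(M,\QQ(A)) \cong K(C,\QQ(A)) \otimes_{K(\Sigma,\QQ(A))} K(H,\QQ(A)).$$
Because handlebody skein modules have explicit multicurve bases, one can try to prove by hand that both $K(H,\QQ(A))$ and $K(C,\QQ(A))$ are finitely generated over the boundary skein algebra of their respective positive boundary, and then deduce finite generation of $K(M)$ over $K(\partial M, \QQ(A))$ by the gluing formula.

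In the case where $\partial M$ consists of tori (the most important case for Dehn-filling applications), one would use the Frohman--Gelca quantum torus isomorphism of Theorem \ref{thm:isom} as the main technical tool. The approach of the present paper, based on annihilating polynomials in the subalgebra $\QQ(A)[m]$, could be refined: instead of projecting onto $\QQ(A)[m]$, one works directly inside $\calT^\theta$ and seeks, for each skein class $x \in K(M,\QQ(A))$, a two-variable annihilating relation expressing some $\tilde{e}_{p,q} \cdot x$ as a $K(T^2,\QQ(A))$-linear combination of classes with smaller Newton polygon. If such relations can be produced combinatorially, then a finite list of generators for $K(M)$ over $K(T^2,\QQ(A))$ can be extracted.

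The main obstacle is conceptual rather than technical: Conjecture \ref{conj:boundary1} contains Witten's Conjecture \ref{conj:closed} as the special case $\partial M = \emptyset$, so a fully general proof is at least as hard as the closed case, which remains wide open. A realistic intermediate goal is therefore to establish the conjecture for restricted families where computations are tractable, namely Seifert-fibered manifolds (building on Hoste--Przytycki, Mroczkowski), manifolds with simple JSJ decomposition, and exteriors of two-bridge and torus links (building on L\^e and March\'e); and to prove clean gluing theorems along incompressible tori and along Heegaard surfaces that preserve finite generation over the boundary algebra. The genuinely hyperbolic case with higher-genus boundary is the least accessible and would likely require a skein-theoretic analog of Thurston's finiteness results for $\mathrm{SL}_2(\C)$-character varieties, or input from quantum Chern--Simons theory, before a general proof is within reach.
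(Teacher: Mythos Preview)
The statement you are addressing is a \emph{conjecture} in the paper, not a theorem: the paper offers no proof of Conjecture~\ref{conj:boundary1}, only motivation and examples. So there is nothing in the paper to compare your proposal against, and your write-up is not a proof but a sketch of a research program. You yourself diagnose this correctly in the third paragraph: the case $\partial M=\emptyset$ is exactly Witten's Conjecture~\ref{conj:closed}, which is open, so no argument along these lines can succeed in full generality at present.

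Beyond that, two of the steps in your outline are genuinely problematic even as a strategy. First, the gluing formula
\[
K(M,\QQ(A)) \;\cong\; K(C,\QQ(A)) \otimes_{K(\Sigma,\QQ(A))} K(H,\QQ(A))
\]
is not a known theorem for skein modules; such van-Kampen-type statements are subtle (the surjection from the tensor product is easy, but injectivity is not), and assuming it begs a large part of the question. Second, even granting that formula, finite generation of $K(H)$ over $K(\Sigma)$ and of $K(C)$ over $K(\partial C)=K(\Sigma)\otimes K(\partial M)$ does not formally yield finite generation of the tensor product over $K(\partial M)$: the tensor is taken over the noncommutative algebra $K(\Sigma)$, and the residual action of $K(\partial M)$ comes only through $K(C)$, so you would still need to control how the $K(\Sigma)$-relations interact with the $K(\partial M)$-module structure. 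In short, the proposal identifies reasonable building blocks and correctly flags the central obstruction, but it does not constitute a proof, and the paper makes no claim that one exists.
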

We note that Conjecture \ref{conj:closed} can be thought as a special case of the above conjecture, as the skein algebra of the empty surface is simply $\QQ(A).$

 However, we argue that one ought to expect a stronger statement than this one. To begin with, let us consider the case of skein modules of link complements. We already mentioned the case of two-bridge knots \cite{Le:2bridge} and torus knots \cite{Mar10}, where in both case, the skein module is finitely generated not only over $K(\partial E_K,\QQ(A))=K(T^2,\QQ(A))$ but over $\QQ(A)[m]$ where $m$ is the meridian. Moreover, the skein module of $E_L$ a two-bridge link complement has been computed by Le and Tran \cite{LT14}, and in that case $K(E_L,\QQ(A))$ is finitely generated over $\QQ(A)[m_1,m_2]$ where $m_1$ and $m_2$ are the meridians of the two components. Those examples lead us to propose the following conjecture in the case of link complements in $S^3:$
\begin{conjecture}\label{conj:links}(Finiteness conjecture for links)

 Let $L=\underset{i=1}{\overset{n}{\cup}} L_i$ be a $n$-th component link with $n\geqslant 1.$ Let $m_1,\ldots, m_n$ be the meridians of $L_1,\ldots,L_n.$ 
 
Then $K(E_L,\QQ(A))$ is a finitely generated $\QQ(A)[m_1,\ldots,m_n]$-module.
\end{conjecture}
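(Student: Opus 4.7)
The plan is to generalize the strategy that Le and Le-Tran used for two-bridge knots and links to arbitrary links via a bridge (or tangle) decomposition. Given an $n$-component link $L \subset S^3$ in bridge position with total bridge number $b$, the exterior $E_L$ splits as a union of two rational tangle exteriors $H_b^{+}$ and $H_b^{-}$ glued along a $2b$-punctured $2$-sphere $\Sigma_{0,2b}$. Each $H_b^{\pm}$ is a genus-$b$ handlebody with $b$ trivial arcs removed, and its skein module is known to be finitely generated as a module over the polynomial ring generated by the peripheral meridians surrounding the $2b$ endpoints. The goal is then to pass from finite generation over all $2b$ peripheral meridians in each half to finite generation of $K(E_L,\QQ(A))$ over just the $n$ component meridians $m_1,\ldots,m_n$ that survive globally.

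The concrete steps I would attempt are as follows. First, I would introduce a complexity function on banded links in $E_L$ (for example, geometric intersection with the bridge sphere, or wrapping number around each bridge arc) and set up a filtration of $K(E_L,\QQ(A))$. Second, using the stacking action of $K(\Sigma_{0,2b},\QQ(A))$ together with Chebyshev-type recursions in the spirit of Frohman-Gelca (now applied to punctured spheres rather than the torus), I would show that within each filtration level any banded link can be reduced to a $\QQ(A)[m_1,\ldots,m_n]$-combination of finitely many standard representatives, at the cost of introducing terms of strictly lower complexity. The key reduction step is that a peripheral meridian around any endpoint of an arc of $H_b^{\pm}$ can be pushed across the arc and re-expressed in terms of the single component meridian $m_i$, modulo lower-complexity corrections. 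Iterating this reduction and inducting on the complexity filtration would yield the desired finite generation.

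The hard part will be controlling the lower-order correction terms: pushing a peripheral meridian past an arc in a tangle generally produces skein elements involving new curves in the punctured sphere, and one must guarantee that these corrections do not proliferate uncontrollably under iteration. This is precisely the point at which Le and Le-Tran exploit the rigid combinatorial structure of rational tangles, which is not available in the general case. A complementary approach would be to lift the classical finiteness of the restriction map $X(E_L)\to X(\partial E_L)$ between $\mathrm{SL}_2(\mathbb{C})$-character varieties to the skein-theoretic level through the Bullock--Przytycki-Sikora surjection at $A=-1$. However, finiteness of the commutative shadow does not automatically promote to finite generation of the $\QQ(A)$-module, because of potential $(A+1)$-torsion phenomena in $K(E_L,\Z[A,A^{-1}])$ and the subtle interaction between quantum deformation and generation. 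A full proof of Conjecture \ref{conj:links} therefore seems to require either a new structural theorem bridging classical and quantum finite generation, or a uniform bridge-decomposition machinery for skein modules that effectively handles the non-rational tangles arising for general links.
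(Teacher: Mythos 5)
The statement you are trying to prove is not a theorem of the paper: it is stated as Conjecture \ref{conj:links} and left open. The paper offers no proof, only supporting evidence, namely the computations of Le for two-bridge knots, March\'e for torus knots, and Le--Tran for two-bridge links, in each of which the skein module is in fact free and finitely generated over the meridian polynomial ring. So there is no argument in the paper to compare yours against, and the relevant question is whether your sketch closes the gap. It does not, and you essentially say so yourself in the final paragraph.

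Concretely, the missing step is the one you flag as ``the hard part'': the claim that a peripheral meridian around an endpoint of a bridge arc can be pushed across the arc and re-expressed in terms of the single component meridian $m_i$ modulo strictly lower-complexity corrections, and that these corrections can be controlled under iteration. This is precisely the content of the rigid rational-tangle combinatorics that Le and Le--Tran exploit, and no substitute for it is provided for general bridge decompositions; without a well-founded complexity that strictly decreases, the induction does not terminate. Your complementary suggestion via the Bullock and Przytycki--Sikora map at $A=-1$ also does not close the gap, for the reason you identify: finite generation of the coordinate ring of the character variety over the restriction to the boundary is a statement about the commutative shadow, and it does not lift to finite generation of $K(E_L,\QQ(A))$ over $\QQ(A)[m_1,\ldots,m_n]$ absent control of the deformation (and of torsion phenomena, which the paper itself warns are expected whenever the complement contains closed incompressible surfaces, i.e.\ precisely outside the small-manifold cases where the conjecture is verified). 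What you have written is a reasonable research program, not a proof; the statement remains a conjecture.
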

We note that in the above listed examples, the module $K(E_L,\Z[A,A^{-1}])$ is even a free $\Z[A,A^{-1}][m_1,\ldots,m_n]$-module of finite rank. One should not expect this pattern to be general; indeed, torus knots and two-bridge knots and links complements are small, and it is thought that closed incompressible surfaces create torsion in the skein modules (see \cite{Oht02}, chapter 4.1 for a discussion of this phenomenon).

The pattern we saw for links hints that the skein module $K(M,\QQ(A))$ of a $3$-manifold with boundary may be finitely generated over a smaller algebra than the whole skein algebra $K(\partial M,\QQ(A))$ of the boundary. We propose the following:
\begin{conjecture}\label{conj:boundary2} (Strong finiteness conjecture for manifolds with boundary)

Let $M$ be a compact oriented $3$-manifold. Then there exists a finite collection $\Sigma_1,\ldots ,\Sigma_k$ of essential subsurfaces $\Sigma_i \subset \partial M$ such that:
\begin{itemize}
\item[-]For each $i,$ the dimension of $H_1(\Sigma_i,\QQ)$ is half that of $H_1(\partial M,\QQ).$ 
\item[-]The skein module $K(M,\QQ(A))$ is a sum of finitely many subspaces $F_1,\ldots,F_k,$ where $F_i$ is a finitely generated $K(\Sigma_i,\QQ(A))-$module.
\end{itemize}
\end{conjecture}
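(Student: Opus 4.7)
The half-lives-half-dies lemma provides a distinguished half-dimensional Lagrangian
$$L_M := \ker\bigl(H_1(\partial M,\QQ) \to H_1(M,\QQ)\bigr) \subset H_1(\partial M,\QQ)$$
for the intersection pairing. The natural first move is to realize $L_M$ by a single essential subsurface $\Sigma_1$ (say a regular neighborhood of a multicurve carrying $L_M$, slightly enlarged to be essential) and attempt the case $k=1$. The flexibility of allowing several $\Sigma_i$'s in the statement is there to absorb manifolds whose skein module is more naturally described as a sum of pieces controlled by several distinct Lagrangian subsurfaces, in particular when no single essential subsurface of $\partial M$ carries the entire kernel Lagrangian. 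In the case of link complements, the meridians span $L_{E_L}$ exactly, recovering Conjecture \ref{conj:links} with $k=1$.

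For finite generation of $K(M,\QQ(A))$ as a $K(\Sigma_1,\QQ(A))$-module I would run an induction on a handle decomposition relative to the boundary. Writing
$$M = (\partial M \times I) \cup (\text{$1$-handles}) \cup (\text{$2$-handles}) \cup (\text{$3$-handles}),$$
the thickened-boundary piece is trivially $K(\Sigma_1)$-finite, $1$-handles enlarge the generating set controllably, and $3$-handles contribute nothing. The crucial step is the $2$-handle attachment: each attaching circle becomes null-homotopic in the enlarged manifold, and a careful Chebyshev expansion of its framed skein class should yield polynomial relations that collapse the presentation down to a finite generating set over $K(\Sigma_1,\QQ(A))$. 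This directly mirrors Theorem \ref{thm:Dehnfilling}, where a Dehn surgery of slope $r$ induces precisely such a reduction relative to the meridian variable, via the Frohman--Gelca identification and the annihilating polynomials of Lemma \ref{lem:minpoly}.

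The main obstacle is the absence, outside the torus case, of a convenient replacement for the quantum torus $\mathcal{T}^\theta$ and of a general analog of Lemma \ref{lem:minpoly}. Even Conjecture \ref{conj:boundary1}, the weaker finite generation over the full algebra $K(\partial M,\QQ(A))$, is open for most $M$, so Conjecture \ref{conj:boundary2} can only be approached once sufficient structure on the boundary side is understood. A serious attack would therefore depend on establishing a higher-genus analog of the Frohman--Gelca isomorphism --- perhaps via stated skein algebras or quantum cluster structures on surfaces --- and on extending the Newton-polygon argument of Theorem \ref{thm:Dehnfilling} to control how successive $2$-handle attachments reduce a multi-variable module presentation over a non-commutative Lagrangian subalgebra. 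What I sketch above is consequently a program: it indicates how the genus-one ingredients used in Theorem \ref{thm:Dehnfilling} might cascade through a handle decomposition, but each step beyond the boundary contribution presently lacks a proven higher-genus tool.
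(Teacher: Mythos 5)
The statement you are addressing is a \emph{conjecture}: the paper offers no proof of it, only supporting evidence (Proposition \ref{prop:examples} for handlebodies and thickened surfaces, Proposition \ref{prop:disk2holes} for $\Sigma_{0,3}\times S^1$, and the two-bridge/torus knot and link computations of Le, March\'e and Le--Tran). Your text is honestly labelled a program rather than a proof, and as such it is consistent with the status of the statement; the decisive step you defer --- controlling a $2$-handle attachment over a ``Lagrangian'' subalgebra of a higher-genus skein algebra, for which no analogue of the Frohman--Gelca isomorphism or of Lemma \ref{lem:minpoly} is available --- is precisely the open content, so nothing in your sketch constitutes progress that could be checked against the paper.

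There is, however, one concrete error in your heuristic that would send the program in the wrong direction. You identify the distinguished half-dimensional subspace with the half-lives-half-dies kernel $L_M=\ker\bigl(H_1(\partial M,\QQ)\to H_1(M,\QQ)\bigr)$ and assert that for a link complement the meridians span $L_{E_L}$. They do not: the meridians map onto a basis of $H_1(E_L,\QQ)\cong\QQ^n$, so they span a \emph{complement} of $L_{E_L}$; it is the (linking-number-corrected) longitudes that die. The same happens in the paper's handlebody example, where the chosen subsurface is a neighborhood of the dual graph $\Gamma$, carrying classes that survive in $H_1(H_g,\QQ)$, while the pants curves are exactly the ones bounding disks. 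So in every example the paper actually exhibits, the subsurfaces $\Sigma_i$ carry a half-dimensional subspace transverse to the kernel, not the kernel itself. The conjecture as stated only constrains $\dim H_1(\Sigma_i,\QQ)$, so your Lagrangian is not formally excluded, but building $\Sigma_1$ as a neighborhood of a multicurve carrying $L_M$ would, already for a knot complement, produce the longitude annulus rather than the meridian annulus, and there is no evidence (and for two-bridge knots it is not how the known finiteness over $\QQ(A)[m]$ works) that $K(E_K,\QQ(A))$ is finitely generated over $\QQ(A)[l]$.
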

Note that we do not assume the sum of the subspaces $F_i$ to be direct. It is clear that this conjecture implies the weaker Conjecture \ref{conj:boundary1}.

Moreover if a link complement $E_L=S^3\setminus L$ satisfies Conjecture \ref{conj:links}, then the relevant collection of subsurfaces of $\partial E_L=\underset{i=1}{\overset{n}{\coprod}} T^2$ may be obtained by taking one surface $\Sigma,$ which is the disjoint union of one annulus neighborhood of each meridian $m_i$ in each component $L_i$ of the link. Moreover the collection of subspaces is reduced to $F_1=K(E_L,\QQ(A)).$

Compared with Conjecture \ref{conj:boundary1}, Conjecture \ref{conj:boundary2} morally states that the module $K(M,\QQ(A))$ has "half the dimension" of $K(\partial M,\QQ(A)).$

We motivate the above statement with the following examples:
\begin{proposition}\label{prop:examples}
We have:
\begin{enumerate}
\item For any $g\geqslant 2,$ the handlebody $H_g$ of genus $g$ satisfies Conjecture \ref{conj:boundary2}.
\item For any closed compact oriented surface $\Sigma,$ the thickened surface $\Sigma \times [0,1]$ satisfies Conjecture \ref{conj:boundary2}.
\end{enumerate}
\end{proposition}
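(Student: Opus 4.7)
The plan is to handle both parts of the proposition via a single mechanism: realize the $3$-manifold $M$ in question as a product $S \times [0,1]$ for some compact oriented surface $S$, then take $k=1$, $\Sigma_1 = S \times \lbrace 0 \rbrace \subset \partial M$, and $F_1 = K(M, \QQ(A))$. The key observation is that for any product $M = S \times [0,1]$, the skein module $K(S \times [0,1], \QQ(A))$ coincides with the skein algebra $K(S, \QQ(A))$, and the stacking action of $K(\Sigma_1 \times [0,1], \QQ(A)) = K(S, \QQ(A))$ on it is just left multiplication in this algebra; hence $K(M, \QQ(A))$ is a cyclic module over $K(\Sigma_1, \QQ(A))$, generated by the empty link. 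This handles part (2) directly with $S = \Sigma$.

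For part (1), the only nontrivial step is the preliminary topological identification $H_g \cong \Sigma_{0, g+1} \times [0,1]$, where $\Sigma_{0, g+1}$ denotes the sphere with $g+1$ open disks removed. This can be seen directly: $\Sigma_{0, g+1} \times [0,1]$ is obtained from a $3$-ball by drilling $g$ vertical cylindrical tunnels through it, which is visibly a handlebody of genus $g$. Equivalently, both $H_g$ and $\Sigma_{0,g+1} \times [0,1]$ are compact orientable $3$-manifolds with free fundamental group of rank $g$ whose boundary is a closed surface of genus $g$, the latter because $\partial (\Sigma_{0, g+1} \times [0,1])$ is the double of $\Sigma_{0, g+1}$ and has Euler characteristic $2\chi(\Sigma_{0, g+1}) = -2g$.

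It remains to check the remaining two conditions of Conjecture~\ref{conj:boundary2} for $\Sigma_1 = S \times \lbrace 0 \rbrace$. The half-dimension condition holds because doubling the surface $S$ along its boundary (or taking the disjoint union $\Sigma \sqcup \Sigma$ in part (2)) doubles the first Betti number, so $\dim H_1(\partial M, \QQ) = 2 \dim H_1(\Sigma_1, \QQ)$. For essentiality: in part (2), $\Sigma_1$ is a full connected component of $\partial M$; in part (1), the $g+1$ boundary circles of $\Sigma_1$ in $\Sigma_g = \partial H_g$ are the double-locus curves of the decomposition of $\Sigma_g$ as two copies of $\Sigma_{0,g+1}$, and form a pants-type system of essential simple closed curves. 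The only step with real content is thus the identification of $H_g$ as a product of a surface with an interval; everything else follows from the trivial fact that any unital algebra is a cyclic module over itself via left multiplication.
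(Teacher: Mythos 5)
Your proof is correct and follows essentially the same route as the paper: realize each manifold as $S\times[0,1]$ for a subsurface $S$ of its boundary meeting the half-dimension condition, take $k=1$ with $F_1=K(M,\QQ(A))$, and note that the empty link generates $K(M,\QQ(A))$ as a cyclic $K(S,\QQ(A))$-module. The only cosmetic difference is the choice of spine surface for $H_g$ (you use the planar surface $\Sigma_{0,g+1}$, the paper uses a regular neighborhood of a trivalent graph dual to a pants decomposition of $\partial H_g$); if anything, you are more explicit than the paper in checking the essentiality and half-dimension requirements of Conjecture \ref{conj:boundary2}.
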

\begin{figure}[!h]
\centering
\def \svgwidth{.6\columnwidth}
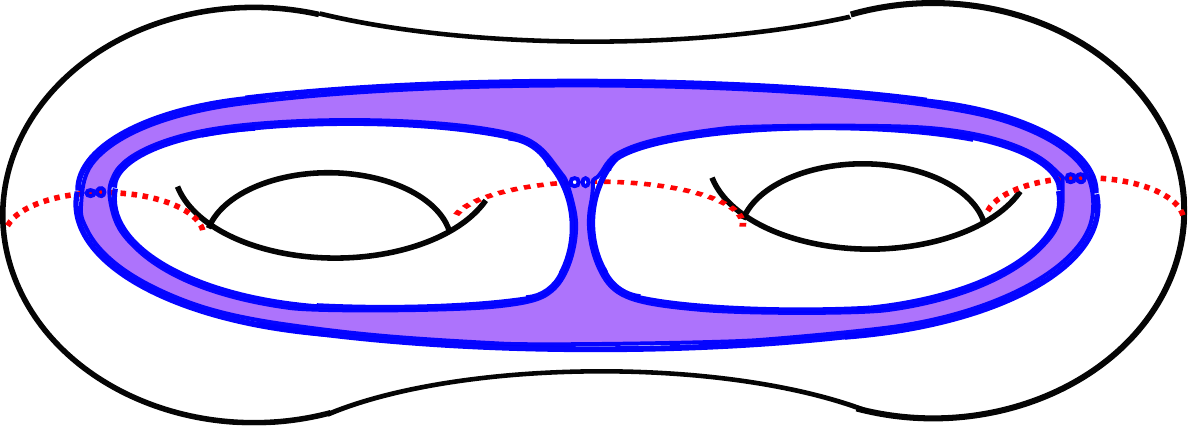
\caption{A handlebody $H_g$ of genus $g=2,$ in red a pair of pants decomposition by curves that bound disks in $H_g,$ in purple a dual graph $\Gamma.$ Notice that the pair $(H_g,\Gamma)$ is homeomorphic to $(\Gamma \times [0,1],\Gamma \times \lbrace 1 \rbrace).$}
\label{fig:handlebody}
\end{figure}
\begin{proof}
In both cases, the collection of subsurfaces will consist of one subsurface only, and the collection of subspaces will just be $F_1=K(M,\QQ(A)).$

For the first claim, pick a pair of pants decomposition $\mathcal{C}$ of $\Sigma_g =\partial H_g$ by simple closed curves which bound disks in $H_g,$ and let $\Gamma$ be a dual trivalent banded graph to the decomposition: $\Gamma$ has one trivalent vertex in each pair of pants, each of its edges intersect exactly one curve of the pair of pants decomposition in exactly one segment. See Figure \ref{fig:handlebody}.
Then the pair $(H_g,\Gamma)$ is homeomorphic to $(\Gamma \times [0,1], \Gamma \times \lbrace 1 \rbrace,$ and thus $K(H_g,\QQ(A))$ is generated over $K(\Gamma,\QQ(A))$ by the empty link.

For the second claim, we note that the boundary of $\Sigma \times [0,1]$ is $\Sigma \times \lbrace 0 \rbrace \coprod \Sigma \times \lbrace 1 \rbrace.$ It is clear that $K(\Sigma \times [0,1],\QQ(A))$ is generated over $K(\Sigma \times \lbrace 1 \rbrace,\QQ(A))$ by the empty link.
\end{proof}
Although, the above examples required the use of one subsurface only, in general you need at least a collection of subsurfaces $(\Sigma_i)_{1\leqslant i \leqslant k}$ and subspaces $(F_i)_{1\leqslant i \leqslant k}.$ 

Let us illustrate this using the computation by Dabkowski and Mroczkowski \cite{DM09} of $K(\Sigma_{0,3}\times S^1,\Z[A,A^{-1}])$ where $\Sigma_{0,3}$ is the pair of pants. Let us recall their result:

\begin{figure}[!h]
\centering
\def \svgwidth{.8\columnwidth}
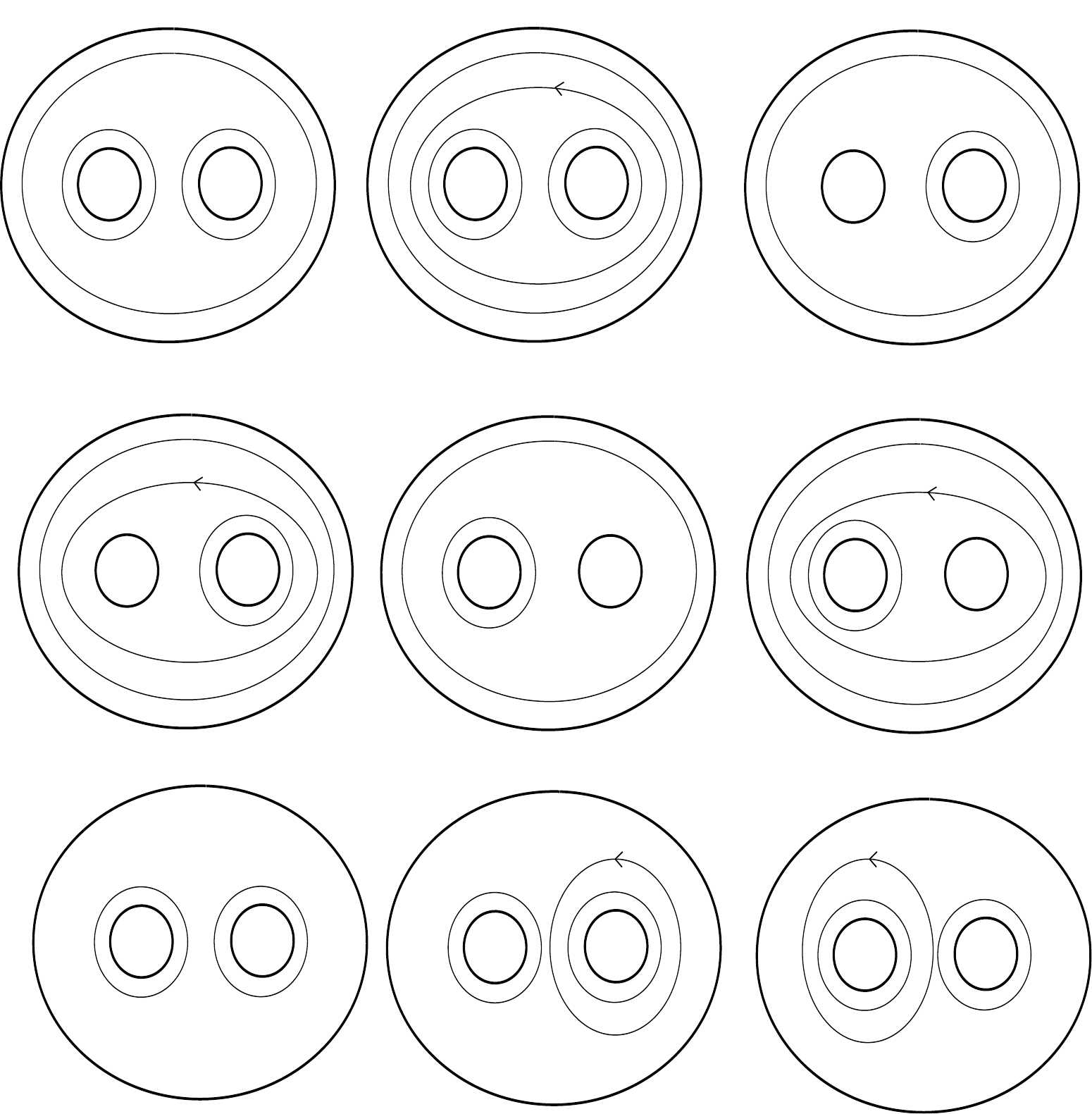
\caption{The nine types of generators of $K(\Sigma_{0,3}\times S^1,\Z[A,A^{-1}]).$ The curves $x,y$ and $z$ are horizontal curves and circle around each hole in $\Sigma_{0,3},$ and $x^i$ denotes $i$ parallel copies of $x.$ The curves with arrows are curves which travel up along the $S^1$ factor near the arrow. The dots $\bullet$ denote vertical curves of the form $pt \times S^1$ and $\bullet^m$ means there are $m$ dots. The same generators may appear in different families; moreover, to get a basis one needs to assume $j=0$ in the last family.}
\label{fig:disk2holes}
\end{figure}
\begin{theorem}\cite{DM09} \label{thm:disk2holes} Let $\Sigma_{0,3}$ be the sphere with three holes. Then $K(\Sigma_{0,3}\times S^1,\Z[A,A^{-1}])$ is free over the nine types of generators described in Figure \ref{fig:disk2holes}
\end{theorem}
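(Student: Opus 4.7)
The strategy is to reduce every banded link in $\Sigma_{0,3}\times S^1$ to a $\Z[A,A^{-1}]$-linear combination of the nine types of generators by repeated use of the Kauffman relations, and then to establish linear independence by exhibiting a representation that separates the nine families.

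I would first fix a convenient model: view $\Sigma_{0,3}$ as a disk with two open subdisks removed, so that $\Sigma_{0,3}\times S^1$ is naturally foliated by horizontal slices $\Sigma_{0,3}\times\{t\}$. Any banded link $L\subset \Sigma_{0,3}\times S^1$ may be isotoped so that its projection to $\Sigma_{0,3}$ is a 4-valent graph with over/under crossings, together with a small amount of winding around the $S^1$-factor. Applying K1 at every crossing expresses $L$ as a linear combination of multicurves in $\Sigma_{0,3}\times S^1$. Up to isotopy in this Seifert-fibered space, an embedded simple closed curve is of three kinds: a horizontal curve boundary-parallel to one of the three holes (giving powers of $x,y,z$), a vertical fiber $\{\mathrm{pt}\}\times S^1$ (a dot $\bullet$), or a helical curve winding simultaneously around a hole and around $S^1$ (drawn with an arrow). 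Combined use of K1, K2, and isotopies that exploit the $S^1$-factor lets one push crossings off helices, cancel oppositely-oriented windings, and arrange the remaining strands into one of the nine normal forms in Figure \ref{fig:disk2holes}.

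For linear independence, I would construct a representation of $K(\Sigma_{0,3}\times S^1,\Z[A,A^{-1}])$ that separates the nine families. A natural candidate is the action on skein modules of the boundary tori: pushing a skein element onto a chosen boundary torus and applying the Frohman--Gelca isomorphism of Theorem \ref{thm:isom} records the $(p,q)$-slope data of the components meeting that boundary. Combined with an auxiliary winding-number invariant counting signed intersections with a horizontal section of the Seifert fibration, this should distinguish the nine families, and confirm that the only redundancy is the one explicitly noted in the statement (the same generators appearing in several types, resolved by imposing $j=0$ in the final family).

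The main obstacle is the first step. Because the $S^1$-factor permits a strand to be slid all the way around the fiber, the Kauffman relations interact non-trivially with such moves, and many distinct skein expressions turn out to be equal. Cataloguing the full list of isotopy-plus-skein manipulations, verifying that the nine families are closed under them, and checking that no further generators are needed is the combinatorial crux of the proof; this is where the bulk of the work in \cite{DM09} lies, and I expect any alternative route would have to confront essentially the same bookkeeping.
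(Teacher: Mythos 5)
The paper does not prove this statement: Theorem \ref{thm:disk2holes} is quoted from Dabkowski--Mroczkowski \cite{DM09} and used as a black box to check Proposition \ref{prop:disk2holes}. So the only question is whether your proposal would constitute an independent proof, and as written it does not: it is a plan in which both halves have genuine gaps.

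On the generation side, resolving all crossings of a projection to $\Sigma_{0,3}$ does not leave you with multicurves in the ordinary sense; the resulting components can still wind arbitrarily many times around the $S^1$-factor, and isotopy in $\Sigma_{0,3}\times S^1$ (sliding a strand all the way around the fiber) can recreate crossings. Reducing such components to the specific normal forms of Figure \ref{fig:disk2holes} --- in particular showing that a single winding (one arrow) per hole suffices, and that the only residual redundancy is the one absorbed by setting $j=0$ in type IX --- is exactly the combinatorial content of \cite{DM09}, and you explicitly defer it. A plan that defers the crux is not a proof.

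On the independence side, the proposed argument would fail as stated. There is no well-defined map obtained by ``pushing a skein element onto a chosen boundary torus'': the skein module of the boundary acts \emph{on} $K(\Sigma_{0,3}\times S^1,\Z[A,A^{-1}])$, but a link in the interior cannot in general be isotoped into a boundary collar, so this does not give a representation of the skein module into $\mathcal{T}^{\theta}$. Moreover, even a collection of invariants that ``separates the nine families'' is far weaker than what the theorem asserts: freeness over $\Z[A,A^{-1}]$ requires that the infinitely many individual generators (indexed by $i,j,k,m$) be linearly independent and that the module have no torsion, not merely that the nine families be distinguished. The standard route, and the one taken in \cite{DM09}, is to exhibit the module by generators and relations coming from the Reidemeister-type moves for arrow diagrams on $\Sigma_{0,3}$ and to verify confluence of the resulting rewriting system (a diamond-lemma argument); your proposal would need to be replaced by something of that kind, or by a genuinely constructed separating family of functionals, before the independence claim is established.
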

We verify the consistency of Conjecture \ref{conj:boundary2} with the above theorem:
\begin{proposition}\label{prop:disk2holes} $\Sigma_{0,3} \times S^1$ satisfies Conjecture \ref{conj:boundary2}.
\end{proposition}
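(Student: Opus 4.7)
The boundary of $M = \Sigma_{0,3}\times S^1$ is a disjoint union of three tori $T_1, T_2, T_3$, so $\dim_\QQ H_1(\partial M, \QQ) = 6$. To satisfy Conjecture \ref{conj:boundary2}, I must exhibit a finite collection of essential subsurfaces $\Sigma_1, \ldots, \Sigma_k$ of $\partial M$ with $\dim_\QQ H_1(\Sigma_j, \QQ) = 3$, together with subspaces $F_j \subset K(M, \QQ(A))$ which sum to the whole skein module and are each finitely generated over $K(\Sigma_j, \QQ(A))$.

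The plan is to start from the explicit free $\Z[A,A^{-1}]$-basis given by Theorem \ref{thm:disk2holes} (which remains a spanning set after tensoring with $\QQ(A)$) and to attach to each of the nine generator families an appropriate subsurface of $\partial M$. In each family, the variable exponents $i, j, k, m$ count parallel copies of the horizontal boundary-parallel curves $x, y, z$ on $T_1, T_2, T_3$ and of the vertical curve $\bullet = pt \times S^1$, while the arrows specify a fixed curve configuration of slope $(1, \pm 1)$ on the relevant boundary torus. To each type I would associate a subsurface $\Sigma_j$ which is the disjoint union of three essential annuli, one on each $T_\ell$, whose cores are chosen to match whatever feature of that generator lives on $T_\ell$: horizontal if only $x, y, z$ appear there, vertical if a factor $\bullet^m$ is present, and of slope $(1, \pm 1)$ if an arrow crosses $T_\ell$. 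Each such $\Sigma_j$ is then a disjoint union of three essential annuli in $\partial M$, so $\dim_\QQ H_1(\Sigma_j, \QQ) = 3$, which is the required half-dimension condition.

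For each type, the variable exponents $(i, j, k, m)$ will be absorbed by the stacking action of powers of the annulus cores on a finite set of initial elements corresponding to the fixed ornament data (the case $i = j = k = m = 0$ together with the fixed arrow pattern). This will give for each type a finitely generated $K(\Sigma_j, \QQ(A))$-submodule $F_j$, and the sum $F_1 + \cdots + F_k$ contains all nine families by Theorem \ref{thm:disk2holes}, hence exhausts $K(M, \QQ(A))$. Note that several of the nine types will share a common choice of annuli (for example, any types whose ornaments all lie in the same slope pattern on each $T_\ell$), so $k$ will be strictly smaller than nine.

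The main technical point will be verifying, case by case across the nine types, that the chosen annular cores really do absorb the family, i.e.\ that stacking the core $x$, $y$, $z$, or $\bullet$ (viewed as a curve in $\Sigma_j$) on a Type $n$ generator yields, modulo the Kauffman relations, a sum of Type $n$ generators with shifted exponents. Since the annuli of $\Sigma_j$ lie on disjoint pieces of $\partial M$ and the ornaments of each generator lie in disjoint regions of $M$, each check is local and combinatorial, and can be read directly off Figure \ref{fig:disk2holes}. I expect this finite bookkeeping, rather than any deep geometric obstruction, to be the harder part of the argument.
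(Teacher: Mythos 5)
Your overall architecture matches the paper's: start from the nine Dabkowski--Mroczkowski families, assign to each group of families a subsurface that is a disjoint union of three annuli, one per boundary torus (so $\dim_\QQ H_1(\Sigma_j,\QQ)=3$, half of $6$), and absorb the variable exponents by the stacking action of the annulus cores. The gap is in your rule for choosing the cores. An arrow is a \emph{fixed} decoration, not a variable one, so nothing needs to be absorbed on its account; it should simply go into the finite generating set. The problem is that several types (for instance Type II, which carries $x^i y^j z^k$ together with a single arrowed curve $\hat{z}$) have both an arrow and a variable horizontal exponent associated to the same boundary torus. Your rule places a slope-$(1,\pm 1)$ annulus there, and multiplication by its core does not shift $k$ by one --- it adds another arrowed curve, producing elements outside the family --- so the $K(\Sigma_j,\QQ(A))$-span of your initial elements would not contain $x^i y^j z^{k}\hat{z}$ for $k\geqslant 1$, and the sum of your $F_j$ would fail to exhaust $K(\Sigma_{0,3}\times S^1,\QQ(A))$. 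A second underspecification: the exponent $m$ of the vertical curve $\bullet = pt\times S^1$ is not attached to any particular torus, so ``vertical if a factor $\bullet^m$ is present'' does not determine which of the three annuli is the vertical one; since each torus can carry only one annulus, and hence absorb only one variable exponent, that choice matters.

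The paper resolves both points by grouping the types so that each group has at most one variable exponent per torus: horizontal annuli on the tori whose curves $x$, $y$, $z$ appear with variable multiplicity, and the single vertical annulus placed on the torus whose horizontal exponent is \emph{absent} from that group ($F_1$ from types I, II with $\Sigma_1$ all horizontal; $F_2$ from types III, IV with the vertical annulus on $T_1$; $F_3$ from types V, VI with it on $T_2$; $F_4$ from types VII, VIII, IX with it on $T_3$), while the arrowed curves are simply listed among the finitely many module generators, e.g.\ $(\emptyset,\hat{z})$ for $F_1$ and $(\emptyset,\hat{x},\hat{y})$ for $F_4$. With that correction your argument coincides with the paper's, and the ``finite bookkeeping'' you defer becomes genuinely routine.
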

\begin{proof}
Let $F_1$ be the subspace of $K(\Sigma_{0,3},\QQ(A))$ generated by the generators of type I and II, $F_2$ the subspace generated by those of type III and IV, $F_3$ the one generated by type V and VI, and finally, $F_4$ that obtained from type VII, VIII and IX generators. Let $T_1,T_2,T_3$ be the three torus boundary components, corresponding to the curves $x,y$ and $z$ respectively. Then let $\Sigma_1$ be the union of three horizontal annuli, one in each $T_i,$ and $\Sigma_2$ (resp. $\Sigma_3,\Sigma_4$) be the subsurfaces that are union of one vertical annulus in $T_1$ (resp. $T_2$,$T_3$) and one horizontal annulus in each of the other two torus boundaries.
Then one can see that $F_1$ (resp. $F_2,$ $F_3,$ and $F_4$) is finitely generated over $K(\Sigma_i,\QQ(A))$ as multiplication by the middle curve in a horizontal annulus in $T_1$ (resp. $T_2,$ $T_3$) add one component of $x$ (resp. $y,$ $z$) and multiplication by the middle curve in a vertical annulus adds one dot. 

If $\hat{x},\hat{y}$ and $\hat{z}$ denote the curves $x,y$ and $z$ with an arrow, a set of generators for $F_1$ (resp. $F_2,$ $F_3$ and $F_4$) is $(\emptyset, \hat{z})$ (resp. $(\emptyset, \hat{z})$, $(\emptyset,\hat{z})$ and $(\emptyset,\hat{x},\hat{y})$).
\end{proof}

\section{A stability theorem under generic Dehn-filling of knots} 
\label{sec:mainthm}
We now turn to the proof of Theorem \ref{thm:Dehnfilling}. In this section $K$ will be a knot in $S^3$ with meridian $m$ and $E_K=S^3\setminus K$ will be the knot complement.

We first start with a lemma explaining that under the hypothesis that $K(E_K,\QQ(A))$ is finitely generated over $\QQ(A)[m],$ any element $f\in K(E_K,\QQ(A))$ has an "annihilating polynomial" $P\neq 0 \in K(\partial E_K,\QQ(A))=K(T^2,\QQ(A))$ such that $P\cdot f=0 \in K(E_K,\QQ(A)).$ 

We express this idea in the quantum torus formulation of $K(T^2,\QQ(A)):$
\begin{lemma}\label{lem:minpoly}(Annihilating polynomials)

 Let $K$ be a knot with meridian $m$ such that $K(E_K,\QQ(A))$ is finitely generated over $\QQ(A)[m].$ Let $f \in K(E_K,\QQ(A)).$ Then there is a polygon $\mathcal{P}\subset \RR^2$ with vertices in $\Z^2$ and coefficients $c_{\alpha,\beta}(A)\in \QQ(A)$ for every $(\alpha,\beta)\in \Z^2 \cap \mathcal{P},$ such that:
\begin{itemize}
\item[-]$(-\mathcal{P})=\mathcal{P}$ and for every $(\alpha,\beta)\in \Z^2\cap \mathcal{P},$ we have $c_{\alpha,\beta}(A)=c_{-\alpha,-\beta}(A).$
\item[-]If $(\alpha,\beta)$ is a vertex of $\mathcal{P}$ then $c_{\alpha,\beta}(A)\neq 0.$
\item[-]We have the equality: \begin{equation}\label{eq:minpol}\left(\underset{(\alpha,\beta) \in \mathcal{P}\cap \Z^2}{\sum} c_{\alpha,\beta}(A) e_{\alpha,\beta} \right)\cdot f =0
\end{equation}
 in $K(E_K,\QQ(A)).$
\end{itemize}
\end{lemma}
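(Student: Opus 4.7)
The plan is a Noetherianity argument. Since $\QQ(A)[m]$ is a principal ideal domain, and $K(E_K,\QQ(A))$ is finitely generated over it by hypothesis, $K(E_K,\QQ(A))$ is a Noetherian $\QQ(A)[m]$-module; in particular every $\QQ(A)[m]$-submodule of it is finitely generated. The key submodule to introduce is
$$M_f\;:=\;\QQ(A)[m]\cdot\mathrm{Span}_{\QQ(A)}\bigl\{\tilde{e}_{0,k}\cdot f : k\ge 0\bigr\}\;\subset\; K(E_K,\QQ(A)).$$
By Noetherianity $M_f$ is finitely generated, and a standard argument then shows that some finite subcollection of the countable generating set suffices; let $k_1<k_2<\cdots<k_s$ be such that $\tilde{e}_{0,k_1}\cdot f,\ldots,\tilde{e}_{0,k_s}\cdot f$ generate $M_f$ over $\QQ(A)[m]$.

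Next I pick any integer $K>k_s$. Since $\tilde{e}_{0,K}\cdot f\in M_f$, there exist polynomials $p_1(m),\ldots,p_s(m)\in\QQ(A)[m]$ with
$$\tilde{e}_{0,K}\cdot f\;=\;\sum_{j=1}^{s}p_j(m)\,\tilde{e}_{0,k_j}\cdot f\quad\text{in }K(E_K,\QQ(A)).$$
Setting
$$P\;:=\;\tilde{e}_{0,K}-\sum_{j=1}^{s}p_j(m)\,\tilde{e}_{0,k_j}\;\in\;K(T^2,\QQ(A))\cong\mathcal{T}^{\theta}$$
immediately gives $P\cdot f=0$. Each summand $p_j(m)\tilde{e}_{0,k_j}$ is a product of $\theta$-invariant elements and so lies in $\mathcal{T}^{\theta}$; expanding $P=\sum c_{\alpha,\beta}(A)e_{\alpha,\beta}$ in the $\{e_{\alpha,\beta}\}$-basis of $\mathcal{T}$ then yields the symmetry $c_{\alpha,\beta}=c_{-\alpha,-\beta}$ automatically.

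The only real subtlety is showing $P\ne 0$, and the specific choice of the family $\{\tilde{e}_{0,k}\cdot f\}_{k\geq 0}$ (whose supports all lie along the $\beta$-axis) is what makes this transparent. Using the product formula in $\mathcal{T}$, each $p_j(m)\tilde{e}_{0,k_j}$ has support contained in $\{(\alpha,\pm k_j):|\alpha|\le\deg p_j\}$, so the entire correction sum $\sum_j p_j(m)\tilde{e}_{0,k_j}$ is supported on the horizontal lines $\beta\in\{\pm k_1,\ldots,\pm k_s\}$. On the other hand, $\tilde{e}_{0,K}=e_{0,K}+e_{0,-K}$ contributes a coefficient $1$ at $(0,K)$, and since $K>k_s$ this coefficient cannot be canceled by the correction term; hence $P\ne 0$. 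Finally, taking $\mathcal{P}$ to be the convex hull in $\RR^2$ of the support of $P$ produces a polygon with $-\mathcal{P}=\mathcal{P}$ (by the coefficient symmetry) and with $c_{\alpha,\beta}\ne 0$ at every vertex of $\mathcal{P}$, by the very definition of convex hull, completing all conditions of the statement. The construction of the annihilator from a linearly arranged family of module elements, designed so as to exhibit a non-cancelable leading term, is the main device of the argument.
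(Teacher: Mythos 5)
Your proof is correct and follows essentially the same route as the paper: the paper extracts a nontrivial $\QQ(A)[m]$-linear dependence among the elements $l^i\cdot f$ directly from finite generation over $\QQ(A)[m]$, then transports the resulting nonzero element of $K(\partial E_K,\QQ(A))$ through the Frohman--Gelca isomorphism and takes the convex hull of its support. Your Noetherian packaging via $M_f$ and the Chebyshev-type elements $\tilde{e}_{0,k}$ (which span the same space as the powers of the longitude) is a cosmetic variant whose one genuine payoff is that the nonvanishing of the annihilator becomes transparent --- the coefficient $1$ of $e_{0,K}$ cannot be cancelled --- a point the paper's proof passes over when it asserts that $Q$ is nonzero.
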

\begin{proof}
Let $l$ be the longitude of $K.$ The infinite family $(l^i\cdot f)_{i \in \BN}$ is an infinite family of element of $K(E_K,\QQ(A))$ which is a finitely generated $\QQ(A)[m]$-module. Thus there exists a non trivial linear dependence relation:
$$a_d(m)l^d\cdot f + \ldots +a_0(m) \cdot f=0,$$
where $d$ is an integer and the $a_i(m) \in \QQ(A)[m]$ do not all vanish. 

One can restate this saying there is a non-zero element 
$$Q=\underset{i=0}{\overset{d}{\sum}} a_i(m)l^i \in K(\partial E_K,\QQ(A))$$ such that $Q\cdot f=0.$ Using the isomorphism \ref{thm:isom}, we get a relation:
$$\left(\underset{(\alpha,\beta) \in \Z^2}{\sum}c_{\alpha,\beta}(A) e_{\alpha,\beta} \right) \cdot f=0$$
for some coefficients $c_{\alpha,\beta}(A)\in \QQ(A),$ where the sum is a finite sum that is invariant under $\theta.$ Let $\mathcal{P}$ be the convex hull of all $(\alpha,\beta)$ such that $c_{\alpha,\beta}(A)\neq 0.$ Then, as the sum is invariant under $\theta,$ we have that $(-\mathcal{P})=\mathcal{P}$ and $c_{-\alpha,-\beta}(A)=c_{\alpha,\beta}(A)$ for every $(\alpha,\beta) \in \Z^2.$ Finally,
$$\left(\underset{(\alpha,\beta) \in \mathcal{P}\cap \Z^2}{\sum} c_{\alpha,\beta}(A)e_{\alpha,\beta}\right) \cdot f=0$$
is the required relation.
\end{proof}
Because of the module structure on $K(E_K,\QQ(A)),$ for any $f \in K(E_K,\QQ(A))$ the set of elements $Q \in K(\partial E_K,\QQ(A))$ such that $Q\cdot f=0$ is a left ideal of $K(E_K,\QQ(A)).$ Hence, once has an annihilating polynomial for $f$ as in Equation \ref{eq:minpol}, one gets many other relations:
\begin{lemma}\label{lem:minpoltranslated}Let $f \in K(E_K,\QQ(A))$ that satisfies Equation \ref{eq:minpol}.

 Then, for any $(\mu,\nu) \in \Z^2,$ we have:
\begin{equation}
\label{eq:minpoltranslated}\left(\underset{(\alpha,\beta) \in \mathcal{P} \cap \Z^2}{\sum} c_{\alpha,\beta}(A)A^{\beta \mu -\alpha \nu} \tilde{e}_{\alpha+\mu,\beta+\nu}\right) \cdot f=0.
\end{equation}
\end{lemma}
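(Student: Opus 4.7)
The plan is to left-multiply the annihilating relation of Lemma~\ref{lem:minpoly} by the element $\tilde{e}_{\mu,\nu}$, using the fact that $K(E_K,\QQ(A))$ is a left module over $K(\partial E_K,\QQ(A))\cong \mathcal{T}^\theta$. Since the action is associative, from $Q\cdot f = 0$ with $Q=\sum c_{\alpha,\beta}(A) e_{\alpha,\beta}$ we immediately obtain $(\tilde{e}_{\mu,\nu}\cdot Q)\cdot f=0$, so the entire problem reduces to a computation inside the quantum torus $\mathcal{T}$: one must show that $\tilde{e}_{\mu,\nu}\cdot Q$ equals the expression displayed in Equation~\ref{eq:minpoltranslated}.

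For that computation, expand $\tilde{e}_{\mu,\nu} = e_{\mu,\nu} + e_{-\mu,-\nu}$ and apply the product rule~\ref{eq:prod1} termwise. This gives
$$\tilde{e}_{\mu,\nu}\cdot Q = \sum_{(\alpha,\beta)\in\mathcal{P}\cap \Z^2} c_{\alpha,\beta}(A)\bigl[A^{\mu\beta-\nu\alpha}\,e_{\mu+\alpha,\nu+\beta} + A^{\nu\alpha-\mu\beta}\,e_{-\mu+\alpha,-\nu+\beta}\bigr].$$
In the second sum, reindex by $(\alpha,\beta)\mapsto(-\alpha,-\beta)$, which preserves the indexing set because Lemma~\ref{lem:minpoly} gives $\mathcal{P}=-\mathcal{P}$, and use the symmetry $c_{-\alpha,-\beta}(A)=c_{\alpha,\beta}(A)$ supplied by the same lemma. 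After the substitution, the second sum becomes $\sum c_{\alpha,\beta}(A)\,A^{\mu\beta-\nu\alpha}\,e_{-\mu-\alpha,-\nu-\beta}$, and pairing it with the first sum collapses the two monomials into a single $\tilde{e}_{\alpha+\mu,\beta+\nu}$. Since $\beta\mu-\alpha\nu = \mu\beta-\nu\alpha$, one arrives at
$$\tilde{e}_{\mu,\nu}\cdot Q = \sum_{(\alpha,\beta)\in\mathcal{P}\cap \Z^2} c_{\alpha,\beta}(A)\,A^{\beta\mu-\alpha\nu}\,\tilde{e}_{\alpha+\mu,\beta+\nu},$$
which, when applied to $f$, is exactly Equation~\ref{eq:minpoltranslated}.

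The lemma is essentially bookkeeping in $\mathcal{T}$; there is no real obstacle. The only point that requires care is tracking the sign of the exponent in $A^{\beta\mu-\alpha\nu}$ as one pairs $e_{\alpha,\beta}$ with its $\theta$-conjugate $e_{-\alpha,-\beta}$, and verifying that the two monomials produced by $\tilde{e}_{\mu,\nu}=e_{\mu,\nu}+e_{-\mu,-\nu}$ contribute the \emph{same} coefficient after the reindexing permitted by $\mathcal{P}=-\mathcal{P}$ and $c_{\alpha,\beta}=c_{-\alpha,-\beta}$.
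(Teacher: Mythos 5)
Your proposal is correct and follows essentially the same route as the paper: left-multiply the annihilating relation by $\tilde{e}_{\mu,\nu}=e_{\mu,\nu}+e_{-\mu,-\nu}$, expand with the product rule, reindex the second sum via $(\alpha,\beta)\mapsto(-\alpha,-\beta)$ using $\mathcal{P}=-\mathcal{P}$ and $c_{\alpha,\beta}=c_{-\alpha,-\beta}$, and recombine into $\tilde{e}_{\alpha+\mu,\beta+\nu}$, concluding by the left-module (equivalently, left-ideal) structure. The exponent bookkeeping matches the paper's computation exactly.
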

\begin{proof}
We multiply the annihilating polynomial of $f$ given in Equation  \ref{eq:minpol} on the left by $\tilde{e}_{\mu,\nu}=e_{\mu,\nu}+e_{-\mu,-\nu} \in \mathcal{T}^{\theta}.$ We get:
\begin{multline*} (e_{\mu,\nu}+e_{-\mu,-\nu})\cdot \left(\underset{(\alpha,\beta) \in \mathcal{P}\cap \Z^2}{\sum} c_{\alpha,\beta}(A)e_{\alpha,\beta}\right)
\\ = \underset{(\alpha,\beta)\in \mathcal{P}\cap \Z^2}{\sum} c_{\alpha,\beta}(A)A^{\beta\mu -\alpha\nu} e_{\alpha+\mu,\beta+\nu} + \underset{(\alpha,\beta)\in \mathcal{P}\cap \Z^2}{\sum} c_{\alpha,\beta}(A)A^{\alpha \nu -\beta \mu} e_{\alpha-\mu,\beta-\nu}
\\ = \underset{(\alpha,\beta)\in \mathcal{P}\cap \Z^2}{\sum} c_{\alpha,\beta}(A)A^{\beta\mu -\alpha\nu} e_{\alpha+\mu,\beta+\nu} + \underset{(-\alpha,-\beta)\in \mathcal{P}\cap \Z^2}{\sum} c_{\alpha,\beta}(A)A^{\beta\mu -\alpha\nu} e_{-\alpha-\mu,-\beta-\nu}
\\ = \underset{(\alpha,\beta)\in \mathcal{P}\cap \Z^2}{\sum} c_{\alpha,\beta}(A)A^{\beta\mu -\alpha\nu} \tilde{e}_{\alpha+\mu,\beta+\nu}
\end{multline*}
where the second equality uses the fact that $(-\mathcal{P})=\mathcal{P}$ and $c_{-\alpha,-\beta}(A)=c_{\alpha,\beta}(A).$ The lemma then follows from the left ideal structure of the set 
$$\lbrace Q \in K(\partial E_K,\QQ(A)) \ \textrm{such that} \ Q\cdot f=0 \rbrace.$$
\end{proof}
The relations given by Equation \ref{eq:minpoltranslated} allow us to say that the submodule $K(\partial E_K,\QQ(A))\cdot f$ is generated by the elements $\tilde{e}_{(\alpha,\beta)}$ where $(\alpha,\beta)$ belongs in a band of finite width in $\RR^2:$
\begin{lemma}\label{lem:band} Let $f \in K(E_K,\QQ(A))$ with annihilating polynomial as in Equation \ref{eq:minpol}. Let $\lambda :\Z^2 \rightarrow \Z$ be a homomorphism such that the maximum $M$ of $\lambda$ on $\mathcal{P}$ is attained in a unique point of $\mathcal{P}.$

Then $K(\partial E_K,\QQ(A))$ is spanned by the family 
$$\lbrace \tilde{e}_{\alpha,\beta} \cdot f \ \textrm{for} \ (\alpha,\beta) \in \Z^2/_{\lbrace \pm 1 \rbrace } \ \textrm{such that} \ |\lambda(\alpha,\beta)|\leqslant M \rbrace.$$
\end{lemma}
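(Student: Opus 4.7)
The plan is to prove, by induction on $N := |\lambda(\gamma, \delta)|$, that every $\tilde{e}_{\gamma, \delta} \cdot f$ with $(\gamma, \delta) \in \Z^2/\{\pm 1\}$ lies in the $\QQ(A)$-span of the family $\{\tilde{e}_{\alpha, \beta} \cdot f : |\lambda(\alpha, \beta)| \leqslant M\}$; since the $\tilde{e}_{\gamma, \delta}$ span $K(T^2, \QQ(A))$ by Theorem \ref{thm:isom}, this will give the desired description of the submodule $K(\partial E_K, \QQ(A)) \cdot f$. Two preliminary observations set things up. First, since $(\alpha_0, \beta_0)$ is the unique maximizer of the linear functional $\lambda$ on the polygon $\mathcal{P}$, it is an extreme point of $\mathcal{P}$, hence a vertex, so the second bullet of Lemma \ref{lem:minpoly} forces $c_{\alpha_0, \beta_0}(A) \neq 0$. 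Second, $-\mathcal{P} = \mathcal{P}$ together with uniqueness of the maximizer and integrality of $\lambda$ on $\Z^2$ yield $\lambda(\alpha, \beta) \in [-M, M - 1]$ for every $(\alpha, \beta) \in \mathcal{P} \cap \Z^2$ distinct from $(\alpha_0, \beta_0)$.

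The base case $N \leqslant M$ is trivial. For the inductive step, suppose $N > M$. Using $\tilde{e}_{\gamma, \delta} = \tilde{e}_{-\gamma, -\delta}$ I may assume $\lambda(\gamma, \delta) = N$ rather than $-N$. I would then apply Lemma \ref{lem:minpoltranslated} with the specific shift $(\mu, \nu) := (\gamma - \alpha_0, \delta - \beta_0)$, so that the $(\alpha_0, \beta_0)$-summand produces exactly $\tilde{e}_{\gamma, \delta}$. Singling out this term, the resulting relation reads
$$c_{\alpha_0, \beta_0}(A)\, A^{\beta_0 \mu - \alpha_0 \nu}\, \tilde{e}_{\gamma, \delta} \cdot f \ +\! \sum_{\substack{(\alpha, \beta) \in \mathcal{P} \cap \Z^2 \\ (\alpha, \beta) \neq (\alpha_0, \beta_0)}}\! c_{\alpha, \beta}(A)\, A^{\beta \mu - \alpha \nu}\, \tilde{e}_{\alpha+\mu, \beta+\nu} \cdot f \ =\ 0,$$
and since the leading coefficient is nonzero, I can solve for $\tilde{e}_{\gamma, \delta} \cdot f$ as a $\QQ(A)$-linear combination of the remaining $\tilde{e}_{\alpha+\mu, \beta+\nu} \cdot f$.

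To close the induction, I need $|\lambda(\alpha + \mu, \beta + \nu)| < N$ for each remaining index. By linearity, $\lambda(\alpha + \mu, \beta + \nu) = \lambda(\alpha, \beta) + (N - M)$, so the bound $\lambda(\alpha, \beta) \in [-M, M - 1]$ gives $\lambda(\alpha + \mu, \beta + \nu) \in [N - 2M, N - 1]$, whence $|\lambda(\alpha + \mu, \beta + \nu)| \leqslant \max(N - 1,\, 2M - N) < N$ as soon as $N > M$. The only substantive point of the proof — and the only place where the hypothesis is used in full strength — is that the translated relation produces $\tilde{e}_{\gamma, \delta} \cdot f$ as the unique ``highest'' term with nonzero coefficient; this is what allows one to solve for it, and it would fail if the maximum of $\lambda$ on $\mathcal{P}$ were attained along an edge rather than at a single vertex, since then several top-level contributions could conceivably cancel.
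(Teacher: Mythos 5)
Your proof is correct and follows essentially the same route as the paper: translate the annihilating relation of Lemma \ref{lem:minpoltranslated} so that the unique $\lambda$-maximal vertex of $\mathcal{P}$ lands on the target index, use the nonvanishing of the vertex coefficient to solve for the top term, and observe that all remaining indices have $\lambda$-value in $[N-2M,\,N-1]$. Your write-up is in fact slightly more careful than the paper's (the explicit induction on $N$ and the strict inequality $N>M$, which also rules out a collision $\tilde{e}_{\alpha+\mu,\beta+\nu}=\tilde{e}_{-\gamma,-\delta}$), but there is no substantive difference in method.
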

\begin{proof}
We need to show that if $|\lambda (\alpha,\beta)| \geqslant M,$ then $\tilde{e}_{\alpha,\beta} \cdot f$ is a linear combination of elements $\tilde{e}_{\alpha',\beta'}\cdot f,$ with $|\lambda ( \alpha',\beta')|<|\lambda(\alpha,\beta)|.$ As $\tilde{e}_{-\alpha,-\beta}=\tilde{e}_{\alpha,\beta}$ by definition, let us assume that $\lambda(\alpha,\beta) \geqslant 0.$ Let $(x,y)$ be the vertex of $\mathcal{P}$ such that $\lambda(x,y)=M.$

We can choose $(\mu,\nu)$ in Equation \ref{eq:minpoltranslated} so that $(x+\mu,y+\nu)=(\alpha,\beta).$ 
As the vertex coefficients are non-zero, and as $\mathcal{P}$ is symmetric, Equation \ref{eq:minpoltranslated} allows one to express $\tilde{e}_{\alpha,\beta}\cdot f$ in terms elements $\tilde{e}_{\alpha'\beta'}\cdot f$ with $\lambda(\alpha,\beta)-2M \leqslant \lambda(\alpha',\beta') <\lambda(\alpha,\beta).$
\end{proof}
Now having a good understanding of the skein module $K(E_K,\QQ(A)),$ we turn to the effect of the Dehn-filling. 
Let us list some of the new relations created by the Dehn-filling of slope $q/p:$
\begin{lemma}\label{lem:dehn_fil_rel}Let $(p,q)$ be coprime integers, and $E_K(q/p)$ be the Dehn-filling of the knot complement $E_K$ of slope $q/p\in \QQ \cup \lbrace \infty \rbrace.$ Then the map 
$$i_*:K(E_K,\QQ(A)) \longrightarrow K(E_K(q/p),\QQ(A))$$ induced by the inclusion $i:E_K \rightarrow E_K(q/p)$ is surjective, and moreover, 

for any $(\alpha,\beta) \in \Z^2$ and any $f \in K(E_K,\QQ(A)),$ one has
$$ \left(A^{p\beta-q\alpha}\tilde{e}_{\alpha+p,\beta+q} +A^{q\alpha-p\beta}\tilde{e}_{\alpha-p,\beta-q}+(A^2+A^{-2})\tilde{e}_{\alpha,\beta} \right)\cdot f=0$$
in $K(E_K(q/p),\QQ(A)).$
\end{lemma}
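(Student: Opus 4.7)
The plan is to deduce both assertions from the geometry of the Dehn filling $E_K(q/p)=E_K\cup_\partial V$, where $V$ is the attached solid torus whose meridian disk $D\subset V$ satisfies $\partial D=\gamma_{q/p}=l^pm^q$ on $\partial E_K$. By the Frohman--Gelca isomorphism (Theorem \ref{thm:isom}), this boundary curve is precisely the one represented by $\tilde{e}_{p,q}\in K(T^2,\QQ(A))$.

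First, for the surjectivity of $i_*$: I would observe that any link $L\subset E_K(q/p)$ can be put in general position with respect to $D$ and then isotoped to be disjoint from $D$ (tubing off innermost disks of intersection). Since $V\setminus D$ is an open $3$-ball, $L$ can subsequently be pushed out of $V$ and into $E_K$, so every banded-link generator of $K(E_K(q/p),\QQ(A))$ lies in the image of $i_*$.

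The heart of the argument is the following key identity: for any $g\in K(E_K,\QQ(A))$,
\begin{equation}\label{eq:keypush}
\tilde{e}_{p,q}\cdot g \;=\; -(A^2+A^{-2})\,g \qquad \text{in } K(E_K(q/p),\QQ(A)).
\end{equation}
To prove this, I would represent $\tilde{e}_{p,q}\cdot g$ by stacking a copy of the curve $\gamma_{q/p}$ on a collar of $\partial E_K$ above a representative of $g$ lying in the interior of $E_K$, and then push this copy through $\partial E_K$ into $V$. Inside $V$ it is isotopic to the boundary of a small sub-disk of $D$, hence to a small unknotted circle bounding an embedded disk and disjoint from $g$; Kauffman relation K2 then yields the claimed evaluation. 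This is the only genuinely topological step of the proof, and it is where I expect the (mild) main obstacle to lie — namely, carrying out the isotopy of $\gamma_{q/p}$ into a small unknotted circle in $V$ without disturbing $g$.

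The remaining step is purely algebraic. Specializing \eqref{eq:keypush} to $g=\tilde{e}_{\alpha,\beta}\cdot f$ and using associativity of the $K(T^2,\QQ(A))$-module structure together with the product formula \eqref{eq:prod2}, I would expand
\[
\tilde{e}_{p,q}\cdot\tilde{e}_{\alpha,\beta} \;=\; A^{p\beta-q\alpha}\,\tilde{e}_{p+\alpha,q+\beta}+A^{q\alpha-p\beta}\,\tilde{e}_{p-\alpha,q-\beta} \;=\; A^{p\beta-q\alpha}\,\tilde{e}_{\alpha+p,\beta+q}+A^{q\alpha-p\beta}\,\tilde{e}_{\alpha-p,\beta-q},
\]
using $\tilde{e}_{-x,-y}=\tilde{e}_{x,y}$ for the second equality. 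Substituting this into \eqref{eq:keypush} and transposing the $-(A^2+A^{-2})\,\tilde{e}_{\alpha,\beta}\cdot f$ term to the left gives precisely the asserted relation in $K(E_K(q/p),\QQ(A))$.
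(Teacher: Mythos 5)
Your proposal is correct and follows essentially the same route as the paper: surjectivity by pushing links off the filling torus, the key relation $\gamma_{q/p}\cdot f=(-A^2-A^{-2})f$ from the fact that $\gamma_{q/p}$ bounds a disk in the filling solid torus, and then the product formula for $\tilde{e}_{p,q}\cdot\tilde{e}_{\alpha,\beta}$ applied to $\tilde{e}_{\alpha,\beta}\cdot f$. (You correctly invoke K2 for the unknot evaluation, where the paper's proof cites K1 in what appears to be a typo.)
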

\begin{proof}
First, we note that any link in $E_K(q/p)$ can be isotoped to be disjoint from the solid torus of the Dehn-filling. Hence $i_*$ is surjective.

Then, notice that the peripheral curve $\gamma_{q/p}$ of slope $q/p$ bounds a disk in $E_K(q/p).$ By the Kauffman relation K1, if $f \in K(E_K,\QQ(A))$ then 
$$\gamma_{q/p} \cdot f = (-A^2-A^{-2})f$$ in $K(E_K(q/p),\QQ(A)).$ 

Using the isomorphism of Theorem \ref{thm:isom}, this is equivalent to 
$$(\tilde{e}_{p,q}+(A^2+A^{-2})e_{0,0})\cdot f=0 \in K(E_K(q/p),\QQ(A)).$$
Notice that this relation is true for any $f\in K(E_K,\QQ(A)).$ We apply it to $\tilde{e}_{\alpha,\beta}\cdot f$ instead of $f$ and get:
\begin{multline*}(\tilde{e}_{p,q}+(A^2+A^{-2})e_{0,0})\cdot (\tilde{e}_{\alpha,\beta}\cdot f)
\\ =\left(A^{p\beta-q\alpha}\tilde{e}_{\alpha+p,\beta+q}+A^{q\alpha-p\beta}\tilde{e}_{\alpha-p,\beta-q}+(A^2+A^{-2})\tilde{e}_{\alpha,\beta}\right) \cdot f=0 \in K(E_K(q/p),\QQ(A)).
\end{multline*}
\end{proof}
Note that the relations listed above do not necessarily generate the kernel of the map $K(E_K,\QQ(A))\rightarrow K(E_K(q/p),\QQ(A)).$ Actually, whenever a $3$-manifold $M_2$ is obtained by attaching a $2$-handle to a $3$-manifold $M_1,$ the work of Hoste and Przytycki \cite{HP93} describes a set of generators for the kernel of the inclusion map 
$$i_*: K(M_1,\QQ(A)) \rightarrow K(M_2,\QQ(A)).$$
 The set of generators are elements of the form $L-L'$ where $L$ is a link in $M_1$ and $L'$ is obtained from $L$ by sliding one the components of $L$ along the $2$-handle.
 
In the above, we only used sliding of trivial components. However, this will turn out to be sufficient to prove our main theorem:
\begin{proof}[Proof of Theorem \ref{thm:Dehnfilling}]
Let $f_1,\ldots,f_d$ be a finite set of generators of $K(E_K,\QQ(A))$ over $\QQ(A)[m].$ Let $q/p\in \QQ \cup \lbrace \infty \rbrace$ be any slope. As $K(E_K(q/p),\QQ(A))$ is spanned by $K(E_K,\QQ(A)),$ to show that $K(E_K(q/p),\QQ(A))$ is finite dimensional, it is sufficient to show that for each $i,$ $K(\partial E_K,\QQ(A))\cdot f_i \subset K(E_K(q/p),\QQ(A))$ is finite dimensional.

\begin{figure}[!h]
\centering
\def \svgwidth{.45\columnwidth}
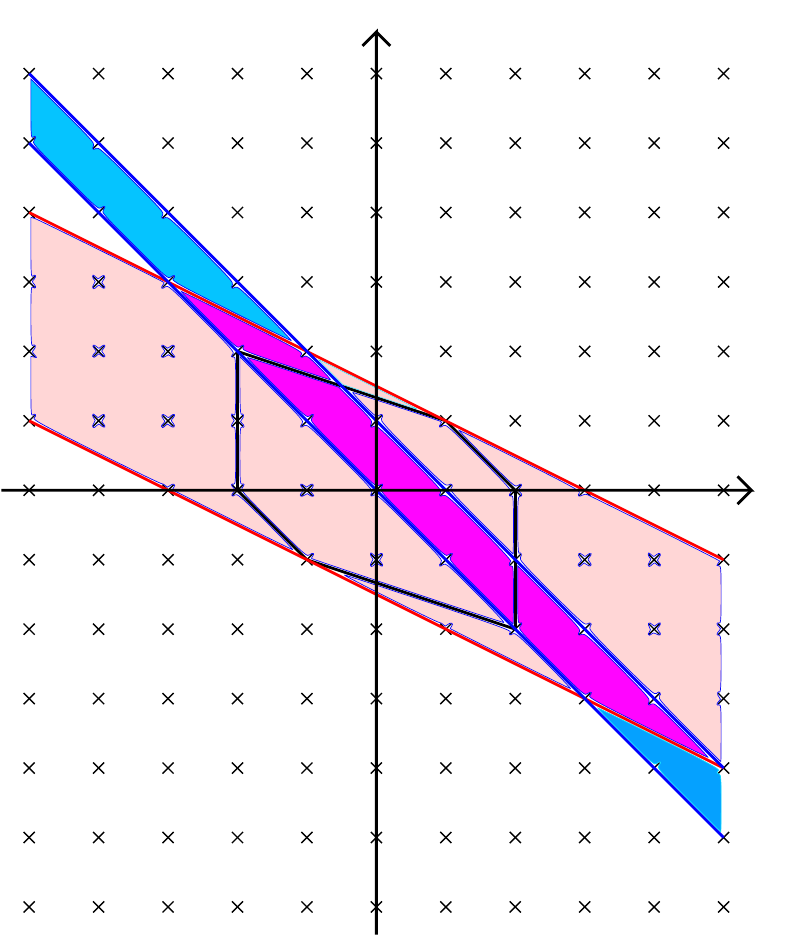
\caption{The polygon $\mathcal{P}$ has slopes $-1,-1/3$ and $\infty,$ and does not have $-1/2$ as a slope. The figure shows in red the band of direction $(p,q)=(2,-1),$ slope $-1/2$ and equation $|\lambda(\alpha,\beta)|=|\alpha+2 \beta|\leqslant 3$ and in blue the band $0\leqslant \varepsilon(\alpha,\beta)=\alpha+\beta\leqslant 1.$   In that case, the subspace $K(\partial E_K(-1/2),\QQ(A))\cdot f$ would be finitely dimensional, and, accounting for the $\pm 1$ symmetry, have dimension at most $11.$}
\label{fig:bands}
\end{figure}

Let $\mathcal{P}_1,\ldots, \mathcal{P}_d$ be the Newton polygons of annihilating polynomials of $f_1,\ldots ,f_d$ in $K(E_K,\QQ(A)),$ as introduced in Lemma \ref{lem:minpoly}. Assume that $q/p$ is not among the slopes of sides of the polygons $\mathcal{P}_1,\ldots,\mathcal{P}_d.$

(Note that some of those polygons might be degenerate; if $\mathcal{P}$ is a segment, we define its set of slopes to be the slope of that segment, if $\mathcal{P}$ is a point, we define its set of slopes to be empty.)

Fix $1\leqslant i \leqslant d$ and let us prove that $K(\partial E_K,\QQ(A))\cdot f_i \subset K(E_K(q/p),\QQ(A))$ is finite dimensional. Let $\lambda :\Z^2 \rightarrow \Z$ be a non-zero homomorphism such that $\lambda(p,q)=0,$ and let $\varepsilon : \Z^2 \rightarrow \Z$ be a homomorphism such that $\varepsilon(p,q)=1.$ Such a homomorphism $\varepsilon$ exists as $(p,q)$ are coprime.

As $q/p$ is not a slope of $\mathcal{P}_i,$ the homomorphism $\lambda$ has a unique maximum $M_i$ on $\mathcal{P}_i.$ By Lemma \ref{lem:band}, the subspace $K(\partial E_K,\QQ(A))\cdot f_i$ of $K(E_K(q/p),\QQ(A))$ is spanned by elements $\tilde{e}_{\alpha,\beta}\cdot f_i$ such that $|\lambda(\alpha,\beta)|\leqslant M_i.$

But using Lemma \ref{lem:dehn_fil_rel} repeatedly, any element $\tilde{e}_{\alpha,\beta}\cdot f_i$ may be expressed as a linear combination of elements $\tilde{e}_{\alpha',\beta'}\cdot f_i$ with $\lambda(\alpha',\beta')=\lambda(\alpha,\beta)$ and $0\leqslant \varepsilon(\alpha',\beta')\leqslant 1.$

So $K(\partial E_K,\QQ(A))\cdot f_i$ is spanned by elements of the form $\tilde{e}_{\alpha',\beta'} \cdot f_i$ where $(\alpha',\beta')$ is a lattice point in the intersection of two non-parallel bands inside $\RR^2.$ This is a finite set, see Figure \ref{fig:bands}.
\end{proof}
\section{Comments and possible extensions}
\label{sec:comments}
We conclude this article with a few comments on the proof of Theorem \ref{thm:Dehnfilling} and its possible extensions.
\\
\\
\\
1) We note that the proof of Theorem \ref{thm:Dehnfilling} used in a fundamental way that the skein module $K(E_K,\QQ(A))$ is not only finitely generated on $K(\partial E_K,\QQ(A))$ but finitely generated on $\QQ(A)[m].$ This leads us to believe that the correct generalization of the finiteness conjecture to manifolds with boundary is Conjecture \ref{conj:boundary2} rather than Conjecture \ref{conj:boundary1}, if one wants to have nice stability properties under Dehn-filling.
\\
\\
2) One may straighten Theorem \ref{thm:Dehnfilling} by working with any manifold $M$ with boundary $\partial M=T^2$ instead of a knot complement in $S^3.$ One would get the following:
\begin{corollary}Let $M$ be a $3$-manifold with boundary $\partial M=T^2$ that satisfies Conjecture \ref{conj:boundary2}. Then all Dehn-fillings of $M$ except at most finitely many satisfies the finiteness conjecture.
\end{corollary}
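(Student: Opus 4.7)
The plan is to adapt the proof of Theorem \ref{thm:Dehnfilling}, running it on each piece of the decomposition given by Conjecture \ref{conj:boundary2}. Since $\partial M = T^2$ has $\dim_{\QQ} H_1 = 2$, the condition $\dim H_1(\Sigma_j, \QQ) = 1$ forces each $\Sigma_j$ to be an essential annulus whose core is a simple closed curve $\gamma_{s_j}$ of some slope $s_j$, so that $K(\Sigma_j, \QQ(A)) = \QQ(A)[\gamma_{s_j}]$. Fix a finite generating set $f_{j,1}, \ldots, f_{j,d_j}$ of each $F_j$ over $\QQ(A)[\gamma_{s_j}]$.

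The first step is to produce annihilating polynomials: for each generator $f_{j,i}$, an analogue of Lemma \ref{lem:minpoly} should yield a non-zero element $Q_{j,i} \in K(T^2, \QQ(A))$ with $Q_{j,i} \cdot f_{j,i} = 0$ in $K(M, \QQ(A))$, with Newton polygon $\mathcal{P}_{j,i}$. The finite ``bad'' set is then
\[
S = \bigcup_{j,i} \{\, \text{slopes of the sides of } \mathcal{P}_{j,i} \,\},
\]
a finite union of finite sets.

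For any Dehn-filling slope $r = q/p \in \QQ \cup \{\infty\} \setminus S$, I would then follow the proof of Theorem \ref{thm:Dehnfilling} verbatim. Choose homomorphisms $\lambda, \varepsilon : \Z^2 \to \Z$ with $\lambda(p,q) = 0$ and $\varepsilon(p,q) = 1$. Since $q/p$ is not a slope of any $\mathcal{P}_{j,i}$, $\lambda$ attains a unique maximum $M_{j,i}$ on $\mathcal{P}_{j,i}$; by Lemma \ref{lem:band}, the subspace $K(\partial M, \QQ(A)) \cdot f_{j,i} \subset K(M(r), \QQ(A))$ is $\QQ(A)$-spanned by the $\tilde{e}_{\alpha,\beta} \cdot f_{j,i}$ with $|\lambda(\alpha,\beta)| \leq M_{j,i}$. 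Lemma \ref{lem:dehn_fil_rel} then reduces each such element to a combination of $\tilde{e}_{\alpha',\beta'} \cdot f_{j,i}$ with $0 \leq \varepsilon(\alpha',\beta') \leq 1$, and the intersection of the two non-parallel bands contains only finitely many lattice points. Since $\QQ(A)[\gamma_{s_j}] \cdot f_{j,i} \subset K(\partial M, \QQ(A)) \cdot f_{j,i}$, the image of $F_j = \sum_i \QQ(A)[\gamma_{s_j}] f_{j,i}$ in $K(M(r), \QQ(A))$ is finite-dimensional; summing over $j$ and using the surjection of Lemma \ref{lem:dehn_fil_rel}, $K(M(r), \QQ(A))$ itself is finite-dimensional.

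The main obstacle is the first step. In Theorem \ref{thm:Dehnfilling} the fact that $K(E_K, \QQ(A)) = F_1$ is itself a $K(T^2, \QQ(A))$-module makes $(l^n \cdot f)_{n \geq 0}$ an infinite family inside the finitely generated $\QQ(A)[m]$-module $K(E_K, \QQ(A))$, so the rank argument of Lemma \ref{lem:minpoly} immediately produces the annihilator. Here an individual $F_j$ is only a $\QQ(A)[\gamma_{s_j}]$-submodule of $K(M, \QQ(A))$ and is not assumed stable under the full $K(T^2, \QQ(A))$-action, so to get $Q_{j,i}$ one must pass to the cyclic $K(T^2, \QQ(A))$-submodule of $K(M, \QQ(A))$ generated by $f_{j,i}$ and exploit that $K(M, \QQ(A)) = F_1 + \cdots + F_k$ is finitely generated over the (non-commutative) subalgebra $\QQ(A)\langle \gamma_{s_1}, \ldots, \gamma_{s_k}\rangle$ of $K(T^2, \QQ(A))$, running a rank argument in a suitable Ore localization.
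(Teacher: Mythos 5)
You correctly reduce to the annulus case, and the body of your argument (slopes of Newton polygons as the bad set, the two transverse bands via Lemmas \ref{lem:band} and \ref{lem:dehn_fil_rel}) is exactly the paper's route; the paper's own proof is essentially the one sentence ``same as Theorem \ref{thm:Dehnfilling} with $m_i$ in place of $m$.'' You are also right that the one step which does not transfer verbatim is the existence of annihilating polynomials: since $F_j$ is only a $\QQ(A)[\gamma_{s_j}]$-submodule and not a $K(T^2,\QQ(A))$-submodule, the family $(l_j^n\cdot f_{j,i})_n$ need not stay inside any finitely generated $\QQ(A)[\gamma_{s_j}]$-module, so Lemma \ref{lem:minpoly} does not apply directly. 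The paper glosses over this point; flagging it is the most valuable part of your write-up.

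However, the repair you sketch would not work. As soon as two of the slopes $s_j$ are distinct, the subalgebra $\QQ(A)\langle\gamma_{s_1},\dots,\gamma_{s_k}\rangle$ contains elements $\tilde{e}_{\alpha,\beta}$ with $(\alpha,\beta)$ ranging over a rank-two sublattice of $\Z^2$ (for intersection number one it is all of $K(T^2,\QQ(A))$, since $\tilde{e}_{1,0}\tilde{e}_{0,1}=A\tilde{e}_{1,1}+A^{-1}\tilde{e}_{1,-1}$ and $\tilde{e}_{0,1}\tilde{e}_{1,0}=A^{-1}\tilde{e}_{1,1}+A\tilde{e}_{1,-1}$ already recover $\tilde{e}_{1,1}$ and $\tilde{e}_{1,-1}$). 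Finite generation over such a subalgebra is no stronger than Conjecture \ref{conj:boundary1}, and a rank argument in an Ore localization cannot then produce torsion: $\mathcal{T}^{\theta}$ is itself a finitely generated, torsion-free, cyclic module over itself, of rank one over its skew field of fractions, yet no nonzero element of it is annihilated. Equivalently, a $B$-linear dependence among the $\tilde{e}_{\alpha,\beta}\cdot f$ with $B$ large gives a relation $\bigl(\sum Q_{\alpha,\beta}\tilde{e}_{\alpha,\beta}\bigr)\cdot f=0$ in which the operator may simply vanish in $\mathcal{T}^{\theta}$; the whole point of Lemma \ref{lem:minpoly} is that this cannot happen when $B=\QQ(A)[m]$ is ``half-dimensional.'' One way to actually close the gap: the torsion elements of $K(M,\QQ(A))$ form a $\mathcal{T}^{\theta}$-submodule ($\mathcal{T}^{\theta}$ is a Noetherian domain, hence Ore); if some generator were non-torsion, the quotient by the torsion submodule would be a nonzero finitely generated torsion-free module, hence would embed in some free module $(\mathcal{T}^{\theta})^r$, where the image of each $F_j$ is supported (coordinatewise, in the basis $\tilde{e}_{\alpha,\beta}$) on finitely many lines of direction $(p_j,q_j)$, while the image of a nonzero cyclic torsion-free submodule is not contained in any finite union of lines --- a contradiction. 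With the annihilating polynomials in hand, the rest of your argument is complete and agrees with the paper.
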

\begin{proof}
Each subsurface $\Sigma_i \subset T^2$ has to be an annulus neighborhood of some non-trivial simple closed curve $m_i \subset T^2.$ Then to show that the image of each subspace $F_i \subset K(M,\QQ(A))$ in $K(M(r),\QQ(A))$ is finite dimensional, the proof is essentially the same as the proof of Theorem \ref{thm:Dehnfilling}, with $m$ playing the role of the meridian.
\end{proof}
Note however that all known (irreducible) examples of manifolds $M$ with $\partial M=T^2$ that satisfy Conjecture \ref{conj:boundary2} are knot complements in $S^3.$
\\
\\
3) For link complements in $S^3$ our argument would sadly fail to show that Conjecture \ref{conj:boundary2} is stable under generic Dehn-filling of one torus boundary component. Indeed, assume for example that $E_L=S^3\setminus L$ where $L=L_1 \cup L_2$ is a two-component link with meridians $m_1,m_2,$ and let $E_L(q/p)$ be the Dehn-filling of slope $q/p$ along the component $L_1.$ If $\gamma \in K(E_L,\QQ(A)),$ looking at linear dependence relations between the infinite family $(l_1^i \cdot \gamma)_{i\in \BN},$ one would get an annihilating polynomial $P(m_1,l_1)$ for $\gamma$ with coefficients not in $\QQ(A)$ but in $\QQ(A)[m_2],$ which is no longer a field. One would however be able to show that the \textit{localized} skein module $$K(E_L(r),\QQ(A))_{m_2}=K(E_L(r),\QQ(A))\underset{\QQ(A)[m_2]}{\otimes} \QQ(A)(m_2)$$ is finitely generated: 
\begin{corollary}Let $L$ be a two component link in $S^3$ such that $K(E_L,\QQ(A))$ is finitely generated over $\QQ(A)[m_1,m_2].$ 

Let $E_L(r)$ be the Dehn-filling of $E_L$ of slope $r$ along the component $L_1.$ 

Then for almost all $r,$ the localized skein module $K(E_L(r),\QQ(A))_{m_2}$ is finitely generated over $\QQ(A)(m_2).$
\end{corollary}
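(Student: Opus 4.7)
The plan is to mirror the proof of Theorem \ref{thm:Dehnfilling} with the coefficient field $\QQ(A)$ replaced by $\QQ(A)(m_2)$. Write $T_1^2, T_2^2$ for the two boundary tori of $E_L$, containing $m_1, m_2$ respectively. The crucial observation is that $T_1^2$ and $T_2^2$ are disjoint, so $K(T_1^2,\QQ(A))$ and $K(T_2^2,\QQ(A))$ commute inside $K(\partial E_L,\QQ(A))$; in particular, $m_2$ acts as a scalar from the viewpoint of the $T_1^2$-action on $K(E_L,\QQ(A))$, and becomes an invertible scalar after localizing at $\QQ(A)[m_2]$.

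First, I would fix a finite generating set $\gamma_1,\ldots,\gamma_d$ of $K(E_L,\QQ(A))$ over $\QQ(A)[m_1,m_2]$. Since $\QQ(A)[m_1,m_2]$ is Noetherian, the family $(l_1^i\cdot\gamma_j)_{i\in\BN}$ spans a finitely generated submodule of $K(E_L,\QQ(A))$, yielding for each $j$ a nontrivial annihilating relation
$$\sum_i a_{j,i}(m_1,m_2)\, l_1^i\cdot\gamma_j \;=\; 0 \quad \text{in } K(E_L,\QQ(A)),$$
with coefficients $a_{j,i}\in\QQ(A)[m_1,m_2]$. Tensoring with $\QQ(A)(m_2)$ over $\QQ(A)[m_2]$ and invoking Theorem \ref{thm:isom} applied to $T_1^2$, then symmetrizing as in the proof of Lemma \ref{lem:minpoly}, this becomes a relation
$$\left(\sum_{(\alpha,\beta)\in\mathcal{P}_j\cap\Z^2} c_{j,\alpha,\beta}(A,m_2)\, \tilde e_{\alpha,\beta}\right)\cdot\gamma_j \;=\; 0$$
in $K(E_L,\QQ(A))_{m_2}$, with $c_{j,\alpha,\beta}\in\QQ(A)(m_2)$ and $\mathcal{P}_j\subset\RR^2$ a symmetric polygon whose vertex coefficients are nonzero in $\QQ(A)(m_2)$; this is automatic because any nonzero polynomial in $m_2$ becomes invertible after localization.

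Next, let $r=q/p$ avoid the finite set of slopes of sides of $\mathcal{P}_1,\ldots,\mathcal{P}_d$; this furnishes the ``almost all'' in the statement. The Dehn-filling relation of Lemma \ref{lem:dehn_fil_rel} along $L_1$ is purely topological and holds verbatim in $K(E_L(r),\QQ(A))_{m_2}$. Repeating Lemma \ref{lem:minpoltranslated}, Lemma \ref{lem:band}, and the closing argument of Theorem \ref{thm:Dehnfilling} with $\QQ(A)(m_2)$ playing the role of $\QQ(A)$, I would conclude that for each $j$ the subspace $K(T_1^2,\QQ(A))\cdot\gamma_j \subset K(E_L(r),\QQ(A))_{m_2}$ is finite dimensional over $\QQ(A)(m_2)$. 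Since the $\gamma_j$ span $K(E_L(r),\QQ(A))$ over $\QQ(A)[m_1,m_2]$, localizing gives
$$K(E_L(r),\QQ(A))_{m_2} \;=\; \sum_{j=1}^d \QQ(A)(m_2)[m_1]\cdot\gamma_j \;\subset\; \sum_{j=1}^d K(T_1^2,\QQ(A))\cdot\gamma_j,$$
which is therefore finite dimensional over $\QQ(A)(m_2)$.

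The principal obstacle — and the reason one is forced to invert $\QQ(A)[m_2]\setminus\{0\}$ rather than work directly over $\QQ(A)$ — is that without this localization, the vertex coefficients of the polygons $\mathcal{P}_j$ are merely polynomials in $m_2$, and the descent step in Lemma \ref{lem:band}, which expresses $\tilde e_{\alpha,\beta}\cdot\gamma_j$ as a combination of nearby $\tilde e_{\alpha',\beta'}\cdot\gamma_j$, requires dividing by those vertex coefficients. Localization turns them into units, and the geometric part of the argument then carries over without change; going further to the unlocalized module $K(E_L(r),\QQ(A))$ would require genuinely new input about the $K(T_2^2,\QQ(A))$-action.
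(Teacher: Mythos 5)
Your proposal is correct and follows essentially the same route as the paper's proof: run the argument of Theorem \ref{thm:Dehnfilling} with the coefficient field $\QQ(A)$ replaced by $\QQ(A)(m_2)$, noting that localization makes the vertex coefficients of the annihilating polygons invertible so that the elimination steps of Lemma \ref{lem:band} still apply. The extra detail you supply (the commuting $T_1^2$- and $T_2^2$-actions, the Noetherian argument, and the final localized spanning statement) is a faithful expansion of what the paper leaves implicit.
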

\begin{proof}Again, the proof is essentially the same as the proof of Theorem \ref{thm:Dehnfilling}, except that in Lemma \ref{lem:minpoly}, the coefficients $c_{\alpha,\beta}$ are elements of $\QQ(A)(m_2)$ instead of $\QQ(A).$ As $\QQ(A)(m_2)$ is a field, the elimination arguments in the proof of Lemma \ref{lem:band} and Theorem \ref{thm:Dehnfilling} still apply.
\end{proof}
Localized skein modules of knots are relevant in the study of the AJ conjecture, see \cite{Le:2bridge}, \cite{LT15} and \cite{LZ:AJ}. We recall that the skein modules of two-bridge links are known to be finitely generated over the two meridians \cite{LT14}.
\\
\\
4) One may wonder if Theorem \ref{thm:Dehnfilling} applies for $\Z[A,A^{-1}]$-coefficients. Under an extra hypothesis on the annihilating polynomials of the generators, we can extend the theorem to $\Z[A,A^{-1}]$ coefficients:
\begin{corollary}\label{cor:Z[A]coeff}Let $K$ be a knot such that $K(E_K,\Z[A,A^{-1}])$ is finitely generated over $\Z[A,A^{-1}][m]$ with generators $f_1,\ldots,f_p.$ 

Assume that $f_1,\ldots,f_p$ have annihilating polynomials as in Lemma \ref{lem:minpoly} such that all their vertex coefficients are monomials $\pm A^k.$ 

Then for all slopes $r\in \QQ \cup \lbrace \infty \rbrace$ except at most finitely many, $K(E_K(r),\Z[A,A^{-1}])$ is finitely generated over $\Z[A,A^{-1}].$
\end{corollary}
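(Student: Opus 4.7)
The strategy is to rerun the proof of Theorem \ref{thm:Dehnfilling} with $\Z[A,A^{-1}]$ coefficients in place of $\QQ(A)$, pinpointing the single step where the choice of coefficient ring actually matters. Since $K(E_K,\Z[A,A^{-1}])$ is generated by $f_1,\ldots,f_p$ over $\Z[A,A^{-1}][m]\subset K(\partial E_K,\Z[A,A^{-1}])$, and the surjection $i_*:K(E_K,\Z[A,A^{-1}])\to K(E_K(r),\Z[A,A^{-1}])$ holds integrally (its proof using only isotopy into the complement of the filling torus), it is enough to show that for all but finitely many slopes $r$ the submodule $K(\partial E_K,\Z[A,A^{-1}])\cdot f_i$ of $K(E_K(r),\Z[A,A^{-1}])$ is a finitely generated $\Z[A,A^{-1}]$-module for each $i$.

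First I would verify that Lemmas \ref{lem:minpoltranslated} and \ref{lem:dehn_fil_rel} pass directly to $\Z[A,A^{-1}]$-coefficients: both proofs use only the product in the quantum torus and the Kauffman K1 relation, so no division is involved. In particular the Dehn-filling relation has extreme coefficients $A^{p\beta-q\alpha}$ and $A^{q\alpha-p\beta}$, which are units in $\Z[A,A^{-1}]$; it can therefore be solved for $\tilde{e}_{\alpha\pm p,\beta\pm q}\cdot f$ to reduce the $\varepsilon$-coordinate of an arbitrary element into the strip $\lbrace 0,1 \rbrace$, exactly as in the original argument.

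The only place where the original proof divides by a non-monomial is the elimination step in Lemma \ref{lem:band}. For a vertex $(x,y)$ of $\mathcal{P}_i$ on which a chosen linear form $\lambda$ attains its unique maximum $M_i$, the translated annihilating relation from Lemma \ref{lem:minpoltranslated}, applied with $(\mu,\nu)$ chosen so that $(x+\mu,y+\nu)$ is the target lattice point, exhibits the extreme term $\tilde{e}_{x+\mu,y+\nu}\cdot f_i$ with leading coefficient $c_{x,y}(A)A^{y\mu-x\nu}$ as a sum (with $\Z[A,A^{-1}]$-coefficients) of $\tilde{e}_{\alpha',\beta'}\cdot f_i$ whose $\lambda$-values are strictly smaller. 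The hypothesis $c_{x,y}(A)=\pm A^k$ makes this leading coefficient a unit in $\Z[A,A^{-1}]$; inverting it carries the induction on $|\lambda|$ through word for word. This is the only point at which the monomial hypothesis is used.

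With the integral analogs of Lemmas \ref{lem:band} and \ref{lem:dehn_fil_rel} in hand, the conclusion of Theorem \ref{thm:Dehnfilling} carries over verbatim: for any slope $q/p$ that is not among the finitely many edge-slopes of $\mathcal{P}_1,\ldots,\mathcal{P}_p$, each $K(\partial E_K,\Z[A,A^{-1}])\cdot f_i$ is spanned over $\Z[A,A^{-1}]$ by those $\tilde{e}_{\alpha',\beta'}\cdot f_i$ with $(\alpha',\beta')$ in the intersection of the two transverse bands $|\lambda(\alpha',\beta')|\leqslant M_i$ and $0\leqslant \varepsilon(\alpha',\beta')\leqslant 1$, which is a finite set of lattice points. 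The only obstacle is the unit-coefficient issue in the elimination step, precisely what the monomial hypothesis is tailored to resolve; once that is in place, no further modification of the argument is required.
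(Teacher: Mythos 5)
Your proposal is correct and follows the same route as the paper: both arguments observe that the only obstruction to running the proof of Theorem \ref{thm:Dehnfilling} over $\Z[A,A^{-1}]$ is the division by vertex coefficients in the elimination steps of Lemma \ref{lem:band} and Lemma \ref{lem:dehn_fil_rel}, and that the monomial hypothesis (together with the monomial coefficients $A^{p\beta-q\alpha}$ in the Dehn-filling relation) makes these coefficients units, so the argument carries over verbatim. Your write-up is in fact slightly more explicit than the paper's about why the remaining lemmas need no modification.
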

\begin{proof}
We assume $\left( \underset{(\alpha,\beta) \in \mathcal{P}_i}{\sum} c_{\alpha,\beta}^i(A) e_{\alpha,\beta} \right)\cdot f_i=0$ in $K(E_K,\Z[A,A^{-1}])$ and let $r=p/q$ be any slope that is not a slope of some $\mathcal{P}_i.$

 Because the coefficients $c_{\alpha,\beta}^i(A)$ are invertible whenever $(\alpha,\beta)$ is a vertex of $\mathcal{P}_i,$ so are the vertex coefficients of the translated annihilating polynomials by Equation \ref{eq:minpoltranslated}. 
 
 Thus the proof of Lemma \ref{lem:band} works also in this setting and $K(E_K,\Z[A,A^{-1}])$ is generated by the $\tilde{e}_{\alpha,\beta}\cdot f_j$ where $(\alpha,\beta)$ belong in some band $|\lambda (\alpha,\beta)|\leqslant M$ where $\lambda :\Z^2\rightarrow \Z$ is a non-zero homomorphism such that $\lambda(p,q)=0.$

As the vertex coefficients in Lemma \ref{lem:dehn_fil_rel} are also invertible, the proof of Theorem \ref{thm:Dehnfilling} is also still applicable and shows that $K(E_K(r),\Z[A,A^{-1}])$ is generated over $\Z[A,A^{-1}]$ by the $\tilde{e}_{\alpha,\beta}\cdot f_j$ where $(\alpha,\beta)$ are lattice points in the intersection of two non-parallel bands.   
\end{proof}
If $K$ is a knot, the minimal annihilating polynomial of the empty link $\emptyset$ is related to the non-commutative $Â$-polynomial of the knot, which by the AJ conjecture should be a quantization of its $A$-polynomial. The $A$-polynomial $A_K(L,M) \in \Z[L^{\pm 1},M^{\pm 1}]$ is known to have $\pm 1$ as vertex coefficients\cite{Coo97}. Moreover, in all cases where the $\hat{A}$ polynomial has been computed, its vertex coefficients are monomials.
\\
\\
5) We note that the set of generators described in the proof of Theorem \ref{thm:Dehnfilling} has no reason of being a basis, as we did not even use the complete set of slide relations to prove finite dimensionality. 

There are two ways of studying linear independence in $K(M,\QQ(A))$ where $M$ is a $3$-manifold. A first way is to keep track of all the slides relations (described in \cite{HP93}) coming from the decomposition of $M$ into $0-,$ $1-$ and $2-$handles. This is the method used for all the early computations of skein modules listed in the introduction, but it can get intractable very fast. 

An alternative way is to use invariants. Firstly, the skein module $K(M,\QQ(A))$ carries a natural $H_1(M,\Z/2)$-grading as the Kauffman relations are homogeneous. Secondly, one can use the Gilmer-Masbaum map which sends an element in $K(M,\QQ(A))$ to the sequence of Reshetikhin-Turaev invariants associated to it. 

Those techniques have been carried out in \cite{Gil18} for the $3$-torus and partially  in the case of $M=\Sigma \times S^1$ where $\Sigma$ is a closed surface of genus $g\geqslant 2$ in \cite{GM18}. 

Gilmer and Masbaum asked the question whether the $H_1(M,\Z/2)$-grading and the Gilmer-Masbaum map distinguish all skeins in $K(M,\QQ(A)).$
\\
\\
6) In the case where $K$ is a two-bridge or torus knot, all our computations can be made more explicit. We have:
\begin{corollary}Let $K$ be a two-bridge or torus knot. Then one can algorithmically compute the finite set of slopes in Theorem \ref{thm:Dehnfilling}, and compute an explicit bound for the dimension of $K(E_K(r),\QQ(A))$ when $r$ is not in this set.

One can also algorithmically check if the condition of Corollary \ref{cor:Z[A]coeff} is satisfied.
\end{corollary}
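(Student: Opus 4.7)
The plan is to make every step in the proof of Theorem~\ref{thm:Dehnfilling} effective, using the explicit presentations of $K(E_K,\QQ(A))$ as a $\QQ(A)[m]$-module given in \cite{Le:2bridge} for two-bridge knots and \cite{Mar10} for torus knots. Both papers provide a finite generating set $f_1,\ldots,f_d$ together with explicit formulas expressing $l\cdot f_i$ (and more generally $\tilde{e}_{\alpha,\beta}\cdot f_i$) as combinations $\sum_j p_{ij}(m)\,f_j$ with $p_{ij}(m)\in\QQ(A)[m]$.

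First, for each generator $f_i$ I would algorithmically produce an annihilating polynomial in the sense of Lemma~\ref{lem:minpoly}. Compute iteratively $f_i,\,l\cdot f_i,\,l^2\cdot f_i,\ldots$, recording each as a vector in $\QQ(A)[m]^d$. Viewed over the fraction field $\QQ(A)(m)$, this sequence must become linearly dependent after at most $d$ steps, and standard Gaussian elimination over $\QQ(A)(m)$ produces a witness dependence; clearing denominators yields a nonzero $Q_i(m,l)\in\QQ(A)[m,l]$ with $Q_i\cdot f_i=0$. Under the Frohman--Gelca isomorphism of Theorem~\ref{thm:isom} this rewrites as a relation $\sum c^i_{\alpha,\beta}(A)\,e_{\alpha,\beta}\cdot f_i=0$; replacing $Q_i$ by $Q_i+\theta(Q_i)$ if necessary we may assume the support is $\theta$-invariant, producing the Newton polygon $\mathcal{P}_i$ and vertex coefficients required by Lemma~\ref{lem:minpoly}.

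With $\mathcal{P}_1,\ldots,\mathcal{P}_d$ computed, everything that remains is combinatorial. The finite bad set of slopes is precisely the union of the edge-slopes of the $\mathcal{P}_i$, which is read off from the list of vertices. For any $r=q/p$ outside this set, pick homomorphisms $\lambda,\varepsilon:\Z^2\to\Z$ with $\lambda(p,q)=0$ and $\varepsilon(p,q)=1$, set $M_i=\max_{\mathcal{P}_i}\lambda$, and count lattice points $(\alpha,\beta)\in \Z^2/_{\lbrace \pm 1 \rbrace}$ satisfying $|\lambda(\alpha,\beta)|\le M_i$ and $0\le\varepsilon(\alpha,\beta)\le 1$; summing over $i$ yields the explicit upper bound on $\dim_{\QQ(A)}K(E_K(r),\QQ(A))$. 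The last claim about Corollary~\ref{cor:Z[A]coeff} is then just a mechanical inspection of the vertex coefficients $c^i_{\alpha,\beta}(A)$ of each $\mathcal{P}_i$ to check whether they are all of the form $\pm A^k$.

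The main obstacle is the effective termination of the Gaussian elimination step: one needs a concrete a priori bound on how many powers $l^j$ must be computed and on the $m$-degrees of the coefficients $p_{ij}(m)$ that appear. For two-bridge and torus knots this is supplied by the fact that Le's and March\'e's recursion formulas have bounded order and controlled $m$-degree growth, so the size of the linear system over $\QQ(A)(m)$ is explicitly controlled; no new skein-theoretic input is required beyond honest bookkeeping of the complexity of their multiplication rules.
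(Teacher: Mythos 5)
Your proposal is correct and follows essentially the same route as the paper: use Le's and March\'e's reduction algorithms to express the $l^i\cdot f_j$ over $\QQ(A)[m]$, extract a linear dependence to get the annihilating polynomials of Lemma \ref{lem:minpoly}, read the bad slopes off the Newton polygons, and bound the dimension by counting lattice points in the intersection of the two bands. The extra care you take about termination of the elimination step is harmless but unnecessary, since finite generation over $\QQ(A)[m]$ already forces a dependence among $d+1$ of the vectors over the fraction field $\QQ(A)(m)$.
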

\begin{proof} Indeed, to do all this one only needs to compute the annihilating polynomials of the generators $f_1,\ldots f_p$ of $K(E_K,\Z[A,A^{-1}])).$ But both the work of Le \cite{Le:2bridge} and the work of March\'e \cite{Mar10} give a method to algorithmically reduce skein elements of $K(E_K,\Z[A,A^{-1}])$ as a $\Z[A,A^{-1}][m]$ linear combination of the generators. Then one simply has to find a $\Z[A,A^{-1}][m]$ linear dependence between the $(l^i f_j)_{i \in \BN}$ for each $f_j$ to get the annihilating polynomials of the generators $f_1,\ldots ,f_p.$ The dimension bound comes from a count of lattice points in the intersection of two non-parallel bands as in Figure \ref{fig:bands}.
\end{proof}
7) Theorem \ref{thm:Dehnfilling} suggests a (naive) approach to proving Conjecture \ref{conj:closed} for "generic" $3$-manifolds. One may start with a handlebody $H_g,$ which, as stated in Proposition \ref{prop:examples}, satisfies Conjecture \ref{conj:boundary2}. Then, one would want to choose a "generic" Heegaard decomposition, attaching disks along some curves in $\partial H_g=\Sigma_g,$ so that at each step, the $3$-manifold obtained satisfies Conjecture \ref{conj:boundary2}. To do this we would need to prove a version of Theorem \ref{thm:Dehnfilling} for generic $2$-handles instead of generic Dehn-filling of knots.
\\
\\
8) Finally, for manifolds with boundary, relative versions of skein modules have been introduced, spanned not only by links but by arcs and links. If $M$ is a $3$-manifold with a set $\mathcal{S}$ of disjoint closed intervals embedded in $\partial M,$ the relative skein module of $(M,\mathcal{S})$ is
$$K(M,\mathcal{S})=\mathrm{Span}_{\QQ(A)}\lbrace \textrm{banded tangle T in M with} \ \partial T=\mathcal{S}\rbrace/_{\textrm{rel isotopy, K1,K2}}.$$
Relative skein modules are thought to be a more natural object with respects to cutting along surfaces, or to triangulations. Those ideas proved particularly fruitful for the study of skein algebras of surfaces \cite{BW16}\cite{FKL19}\cite{Le:triangular}.

One may want to formulate a finiteness conjecture also in that context. 
\subsection*{Acknowledgements} 
The author would like to thank L\'eo Benard, Anthony Conway, Stavros Garoufalidis, Filip Misev and Gauthier Ponsinet for helpful conversations. This work was realized while the author was host at the Max Planck Institute for Mathematics in Bonn. The author would like to thank the institute for its hospitality.


\bibliographystyle{hamsalpha}
\bibliography{biblio}
\end{document}